\theoremstyle{plain}
\newtheorem{theorem}{Theorem}[section]
\newtheorem{prop}[theorem]{Proposition}
\newtheorem{cor}[theorem]{Corollary}
\newtheorem{lemma}[theorem]{Lemma}
\theoremstyle{definition}
\newtheorem{ex}[theorem]{Example}
\newtheorem{remark}[theorem]{Remark}
\newcommand{\C}[0]{\mathbb C}
\renewcommand{\d}[0]{\mathbf d}
\newcommand{\E}[0]{\mathcal E}
\newcommand{\lbeta}[0]{\bar{\beta}}
\newcommand{\N}[0]{\mathbb N}
\renewcommand{\P}{\mathbb P}
\newcommand{\Q}[0]{\mathbb Q}
\newcommand{\q}{\Q\text{HS}}
\newcommand{\x}[0]{\mathbf x}
\newcommand{\Z}[0]{\mathbb Z}
\newcommand{\z}{\Z\text{HS}}
\DeclareMathOperator{\lcm}{lcm}
\title[Normal surface singularities with integral homology sphere link]
{Normal surface singularities with an integral homology sphere link related to space monomial curves with a plane semigroup}
\author[J.~Mart\'{\i}n-Morales]{Jorge Mart\'{\i}n-Morales}
\address[J.~Mart\'{\i}n-Morales]{Centro Universitario de la Defensa, IUMA \\
Academia General Militar \\ 
Ctra.~de Huesca s/n. \\ 
50090 Zaragoza, Spain}
\email{jorge@unizar.es}
\author[L.~Vos]{Lena Vos}
\address[L. Vos]{
KU Leuven \\
Departement Wiskunde \\
Celestijnenlaan 200B, bus 2400 \\
3001 Leuven, Belgium}
\email{lena.vos@kuleuven.be}
\thanks{This project is partially supported by the Research Foundation - Flanders (FWO) project G.0792.18N. The first author is partially supported by MTM2016-76868-C2-2-P, Gobierno de Arag\'on (Grupo de referencia `\'Algebra y Geometr\'ia') cofunded by Feder 2014-2020 `Construyendo Europa desde Arag\'on', and by FQM-333 from `Junta de Andaluc\'ia'. The second author is supported by a PhD Fellowship of the Research Foundation - Flanders (no. 71587).}
\begin{document}

\begin{abstract}
In this article, we consider an infinite family of normal surface singularities with an integral homology sphere link which is related to the family of space monomial curves with a plane semigroup. These monomial curves appear as the special fibers of equisingular families of curves whose generic fibers are a complex plane branch, and the related surface singularities appear in a proof of the monodromy conjecture for these curves. To investigate whether the link of a normal surface singularity is an integral homology sphere, one can use a characterization that depends on the determinant of the intersection matrix of a (partial) resolution. To study our family, we apply this characterization with a partial toric resolution of our singularities constructed as a sequence of weighted blow-ups. 
\end{abstract}

\footnote{\emph{2020 Mathematics Subject Classification.} Primary: 14J17; 
Secondary: 14E15, 
57K10. 
}
\footnote{\emph{Key words and phrases.} normal surface singularities, rational and integral homology sphere link, (partial) resolution of singularities.} 

\maketitle


\section*{Introduction} \label{sec:intro}

For $(S,0) \subset (\C^n,0)$ a germ of a normal surface singularity, the \emph{link} $L(S,0)$ is defined as the intersection of $S$ with a small closed ball centered at the origin in $\C^n$. The topology of $L(S,0)$ can provide interesting information about the singularity $(S,0)$. For example, Mumford~\cite{Mum} showed that $L(S,0)$ is simply connected if and only if $S$ is smooth at~$0$, and Neumann~\cite{NeuCal} showed that $L(S,0)$ determines and is determined by the dual graph of a good resolution of $(S,0)$ (it is a \emph{graph manifold} whose plumbing decorated graph is a dual resolution graph of $(S,0)$). In this article, we are interested in normal surface singularities whose link is a \emph{integral homology sphere} or $\z$, that is, whose link has the same integral homology as a three-dimensional sphere. More generally, we can consider normal surface singularities having a \emph{rational homology sphere} or $\q$ link. \\

More precisely, we will study the link of an infinite family of normal surface singularities $(S,0) \subset (\C^{g+1},0)$ with $g \geq 2$ related to the family of \emph{space monomial curves with a plane semigroup}. These monomial curves arise as the special fibers of certain equisingular deformations of irreducible plane curve singularities. Such deformations are classical objects in singularity theory, and have been studied and generalized by, among others, Teissier, Goldin, Gonz\'alez-P\'erez, and Tevelev. Recently, they have played an important role in the solution of Yano's conjecture~\cite{BlancoYano}, and the solution of Dimca-Greuel's conjecture for branches by Alberich-Carrami\~nana et al.~\cite{AABM}. The motivation of the present work is the question by N\'emethi whether weighted blow-ups can be used to find examples of normal surface singularities having a $\z$ link. N\'emethi has intensely studied surfaces singularities and, more specifically, surface singularities with a $\q$ link, see for instance~\cite{NN} or, for a more modern approach,~\cite{CLM},~\cite{LN}, and the references listed there. \linebreak We will also compare our family with two important families of surface singularities. First, we will take a look at the \emph{Brieskorn-Pham surface singularities} $\{x_1^{a_1} + x_2^{a_2} + x_3^{a_3} = 0\} \subset (\C^3,0)$, whose link is a $\z$ if the exponents $a_i \geq 2$ for $i = 1,2,3$ are pairwise coprime. Second, we will consider the \emph{singularities of splice type}, having a $\z$ link, introduced by Neumann and Wahl~\cite{NW1}. We will see that our surface singularities $(S,0) \subset (\C^{g+1},0)$ with a $\z$ link are always of splice type, but they are never Brieskorn-Pham if $g\geq 3$. \\

To construct a space monomial curve with a plane semigroup, we start with a germ $\mathcal C:=\{f=0\}\subset (\C^2,0)$ of a complex plane curve defined by an irreducible series ${f\in\C[[x_0,x_1]]}$ with $f(0) = 0$. Let \[\nu_{\mathcal C}: R:=\frac{\C[[x_0,x_1]]}{(f)} \longrightarrow \N: h \mapsto \dim_\C \frac{\C[[x_0,x_1]]}{(f,h)}\] be the associated valuation. The semigroup $\Gamma(\mathcal C):=\{\nu_{\mathcal C}(h) \mid h \in R\setminus \{0\}\}\subset \N$ is finitely generated, and we can identify a unique minimal system of generators $(\bar{\beta}_0,\ldots,\bar{\beta}_g)$ of $\Gamma(\mathcal C)$. Define $(Y,0) \subset (\C^{g+1},0)$ as the image of the monomial map $M:(\C,0) \rightarrow (\C^{g+1},0)$ given by $M(t)=(t^{\bar{\beta}_0},\ldots,t^{\bar{\beta}_g}).$ This is an irreducible curve with the `plane' semigroup $\Gamma(\mathcal C)$ as its semigroup, and it is the special fiber of a flat family $\eta:(\chi,0) \subset (\C^{g+1} \times \C,0)\rightarrow (\C,0)$ whose generic fiber is isomorphic to $\mathcal C$. We call $Y$ a \emph{space monomial curve}, and the explicit equations defining $Y$ in $\C^{g+1}$ are of the form
	\[\renewcommand{\arraystretch}{1.6}{\left\{ 
	\begin{array}{r c l l}
	x_1^{n_1} & - & x_0^{n_0}  &  = 0 \\
	x_2^{n_2}  &- & x_0^{b_{20}}x_1^{b_{21}} &= 0  \\
	& \vdots & &\\
	x_g^{n_g} &- & x_0^{b_{g0}}x_1^{b_{g1}}\cdots x_{g-1}^{b_{g(g-1)}} & = 0,\\
 	 \end{array}
	\right.}\]
where $n_i > 1$ and $b_{ij} \geq 0$ are integers that are defined in terms of $(\bar{\beta}_0,\ldots, \bar{\beta}_g)$. \\

The \emph{monodromy conjecture} for these space monomial curves $Y \subset \C^{g+1}$ with $g\geq 2$ is proven in~\cite{MVV1} together with~\cite{MVV2}; an overview of these two articles can be found in the short note~\cite{MMVV}. Roughly speaking, the monodromy conjecture for $Y \subset \C^{g+1}$ states that the poles of the \emph{motivic, or related, Igusa zeta function} of $Y$ induce \emph{monodromy eigenvalues} of $Y$. The computation of the motivic Igusa zeta function and its poles is the main subject of~\cite{MVV2}; the study of the monodromy eigenvalues and proof of the monodromy conjecture can be found in~\cite{MVV1}. A key ingredient in this proof is the consideration of the curve $Y$ as the Cartier divisor $\{f_i = 0\}$ for some $i \in\{1,\ldots, g\}$ on a \emph{generic embedding surface} $S \subset \C^{g+1}$ defined by
\begin{equation}	\label{eq:equations-S-intro}
		\renewcommand{\arraystretch}{1.4}{\left\{\begin{array}{c c l l l}
		f_1& + & \lambda_2f_2  &  = 0 \\
		f_2& + & \lambda_3f_3 &= 0  \\
		& \vdots & &\\
		f_{g-1} & +  & \lambda_gf_g & = 0.\\
    	\end{array}\right.}
\end{equation}
For \emph{generic} coefficients $(\lambda_2,\ldots, \lambda_g) \in (\C\setminus \{0\})^{g-1}$, the scheme $S := S(\lambda_2,\ldots, \lambda_g)$ is a normal complete intersection surface which is smooth outside the origin. In this article, we are interested in the link of these normal surface singularities $(S,0) \subset (\C^{g+1},0)$. \\
\newpage

For $g = 2$, one can easily see that $(S,0) \subset (\C^3,0)$ is a Brieskorn-Pham surface singularity with exponents $n_0,n_1$ and $n_2$. For $g \geq3$, we will show that $(S,0) \subset (\C^{g+1},0)$ is never Brieskorn-Pham by considering the \emph{rupture exceptional curves} in the \emph{minimal good resolution} of $(S,0)$, that is, the exceptional curves that are either non-rational or have valency at least $3$ (i.e., they have at least $3$ intersections with other exceptional curves). While a Brieskorn-Pham surface singularity has at most one rupture exceptional curve in its minimal good resolution, our surface singularities have at least $g-1$ rupture exceptional curves. To show this, we will make use of a \emph{good $\Q$-resolution} of $(S,0)$, see Proposition~\ref{prop:rupture-divisors}. This is a resolution in which the final ambient space can have abelian quotient singularities, and the exceptional locus is a normal crossing divisor on such a space. A good $\Q$-resolution can be obtained as a sequence of \emph{weighted blow-ups} and induces a good resolution by resolving the singularities of the final ambient space, which are \emph{Hirzebruch-Jung singularities}. The good $\Q$-resolution $\hat{\varphi}: \hat{S} \rightarrow S$ that we will consider consists of the first $g-1$ steps of the embedded $\Q$-resolution of $Y \subset S$ constructed in~\cite[Section 5]{MVV1}. In particular, the \emph{dual graph} of $\hat{\varphi}$ is a tree as in Figure~\ref{fig:dual-graph}. \\

Since we already know that $(S,0)$ for $g = 2$ has a $\z$ link if and only if its exponents $n_i$ are pairwise coprime, it remains to investigate when $(S,0) \subset (\C^{g+1},0)$ has a $\z$ link for $g\geq 3$. For this purpose, we will make use of a characterization for a general normal surface singularity $(S,0) \subset (\C^n,0)$ to have a $\z$ link depending on the \emph{determinant} of $(S,0)$. The \emph{determinant} of a partial or good resolution of a normal surface singularity $(S,0)$ is defined as the determinant of the intersection matrix of the resolution, that is, it is the determinant of the matrix $(E_i\cdot E_j)_{ 1 \leq i,j \leq r}$, where $E_1,\ldots, E_r$ are the exceptional curves of the resolution. Geometrically, the cokernel of the intersection matrix of a good resolution of $(S,0)$ is equal to the torsion part of $H_1(L(S,0),\Z)$. The \emph{determinant} $\det(S)$ of a normal surface singularity $(S,0)$ is the absolute value of the determinant of some good resolution of $(S,0)$. Since the torsion part of $H_1(L(S,0),\Z)$ is a finite group of order $\det(S)$, this is independent of the chosen good resolution. In practice, $\det(S)$ can be computed as the product of the absolute value of the determinant of a partial resolution of $(S,0)$ and the orders of the small groups acting on the singularities of the new ambient space, see~\eqref{eq:det-bir-morphism}. \\

 In terms of the determinant $\det(S)$ and a good resolution $\pi: \tilde S \rightarrow S$ of  a normal surface singularity $(S,0)$, the characterization can now be formulated as follows: the link $L(S,0)$ is a $\z$ if and only if $\det(S) = 1$ and $\pi$ has only rational exceptional curves and a tree as dual graph. Because the good $\Q$-resolution $\hat{\varphi}: \hat{S} \rightarrow S$ of our singularities has a tree as dual graph, and the singularities of $\hat{S}$ can be resolved with rational exceptional curves and a bamboo-shaped dual graph, we only need to check when both $\det(S) = 1$ and the exceptional curves of $\hat{\varphi}$ are rational. Furthermore, we can express $\det(S)$ in terms of the orders of the singularities of $\hat{S}$ and the determinant of $\hat{\varphi}$. To compute the latter determinant, we will first prove in Proposition~\ref{prop:detA} a formula for the determinant of a general good $\Q$-resolution with the same dual graph as in Figure~\ref{fig:dual-graph} by rewriting it in terms of a specific kind of tridiagonal matrices. For our good $\Q$-resolution $\hat{\varphi}: \hat{S} \rightarrow S$, this immediately implies the expression for the determinant of $\hat{\varphi}$ and of the singularity $(S,0)$ in Corollary~\ref{cor:detA} and Corollary~\ref{cor:detS}, respectively. Together with the properties of $\hat{\varphi}$, this yields the following theorem. As the same approach also gives conditions for $(S,0)$ to have a $\q$ link, which is true if and only if the dual graph of a good resolution is a tree with only rational exceptional curves, we can state our main result as the following generalization of the characterization for Brieskorn-Pham surface singularities. 
 
{\renewcommand\thetheorem{\Alph{theorem}}
\begin{theorem}\label{thm:rat-int-hom}
Let $(S,0) \subset (\C^{g+1},0)$ be a normal surface singularity defined by the equations~\eqref{eq:equations-S-intro} with $g \geq 2$. The link of $(S,0)$ is a $\q$ if and only if for all $k = 1,\ldots, g-1$, we have $\gcd(n_k,\lcm(n_{k+1},\ldots, n_g)) = 1$ or ${\gcd(\frac{\lbeta_k}{e_k},\lcm(n_{k+1},\ldots, n_g)) =1}$, where $e_k := \gcd(\lbeta_0,\ldots,\lbeta_k)$. The link of $(S,0)$ is a $\z$ if and only if the exponents $n_i$ for $i = 0,\ldots, g$ are pairwise coprime and $\gcd(\frac{\lbeta_k}{e_k},e_k) = 1$ for $k = 2,\ldots, g-1$. 	
\end{theorem}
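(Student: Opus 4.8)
The plan is to derive the theorem from the machinery already assembled in the excerpt. By the stated characterization, $L(S,0)$ is a $\q$ if and only if a good resolution has a tree as dual graph with only rational exceptional curves, and $L(S,0)$ is a $\z$ if and only if additionally $\det(S)=1$. For our good $\Q$-resolution $\hat\varphi:\hat S\to S$ we already know (from the discussion preceding the theorem) that its dual graph is the tree of Figure~\ref{fig:dual-graph}, and that resolving the Hirzebruch--Jung singularities of $\hat S$ replaces each vertex-singularity by a bamboo of rational curves and keeps the graph a tree. Hence the resulting good resolution $\pi:\tilde S\to S$ always has a tree as dual graph, and its non-rational exceptional curves can only be among the $g-1$ exceptional curves of $\hat\varphi$ itself. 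So the whole problem reduces to two purely arithmetic questions: (1) when are all exceptional curves of $\hat\varphi$ rational, and (2) when is $\det(S)=1$?

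For question (1), I would recall from~\cite{MVV1} the description of the $k$-th exceptional curve $\hat E_k$ of $\hat\varphi$ as (the normalization of) a curve with an explicit equation; its genus is governed by the number of points it meets and by a cyclic-covering-type formula. The expected outcome is that $\hat E_k$ is rational precisely when a gcd condition of the shape $\gcd(n_k,\lcm(n_{k+1},\dots,n_g))=1$ or $\gcd(\lbeta_k/e_k,\lcm(n_{k+1},\dots,n_g))=1$ holds — matching exactly the two alternatives in the $\q$ part of the statement. Combining this with the already-established facts that the dual graph is a tree and the HJ-resolution only adds rational bamboos, the characterization then yields the $\q$ statement directly: $L(S,0)$ is a $\q$ iff every $\hat E_k$ is rational iff the displayed gcd conditions hold for all $k=1,\dots,g-1$.

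For question (2), I would invoke Corollary~\ref{cor:detS}, which expresses $\det(S)$ as the product of the determinant of $\hat\varphi$ (given by Corollary~\ref{cor:detA}, itself a consequence of the tridiagonal-matrix formula in Proposition~\ref{prop:detA}) with the orders of the quotient singularities of $\hat S$. The task is then to show that this product equals $1$ if and only if the $n_i$ ($i=0,\dots,g$) are pairwise coprime and $\gcd(\lbeta_k/e_k,e_k)=1$ for $k=2,\dots,g-1$. Since all factors are positive integers, $\det(S)=1$ forces each factor to be $1$; I would unwind the explicit formula for $\det(\hat\varphi)$ (a product/sum of terms built from the $n_i$, $\lbeta_i$, $e_i$) and the orders of the HJ singularities, and massage these gcd-and-lcm expressions — using $e_{k}=\gcd(e_{k-1},\lbeta_k)$ and the standard relations among the $n_i$, $\lbeta_i$, $e_i$ for a plane semigroup — into the stated coprimality conditions. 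One also checks that when these coprimality conditions hold, all the $\q$-side gcd alternatives are automatically satisfied (pairwise coprimality of the $n_i$ makes $\gcd(n_k,\lcm(n_{k+1},\dots,n_g))=1$), so the $\z$ conditions indeed refine the $\q$ conditions, consistent with $\z\Rightarrow\q$.

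The main obstacle I anticipate is the bookkeeping in step (2): translating the determinant formula from Corollary~\ref{cor:detS} — a possibly unwieldy expression in $n_i$, $\lbeta_i$, and the $e_k$ — into the clean statement that it is $1$ exactly under pairwise coprimality of the $n_i$ together with $\gcd(\lbeta_k/e_k,e_k)=1$. This requires carefully identifying which prime could divide the product and locating it in exactly one of the factors, which in turn needs the arithmetic identities satisfied by generators of a plane semigroup (e.g. $e_{k-1}=n_k e_k$, $n_k\lbeta_k<\lbeta_{k+1}$, and $e_k\lbeta_{k+1}\in\langle\lbeta_0,\dots,\lbeta_k\rangle$). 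The genus computation in step (1) is comparatively routine once the equation of $\hat E_k$ from~\cite{MVV1} is in hand, and assembling the final iff via the characterization is then immediate.
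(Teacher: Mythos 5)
Your proposal follows essentially the same route as the paper: reduce to Theorem~\ref{thm:conditions-rat-int} via the good $\Q$-resolution $\hat\varphi$ (whose dual graph is a tree and whose Hirzebruch--Jung singularities resolve into rational bamboos), obtain the $\q$ condition from the genus of the exceptional curves computed via~\cite{MVV1}, and obtain the $\z$ condition by unwinding $\det(S)$ from Corollary~\ref{cor:detS}, exactly as the paper does at the end of Section~\ref{sec:Q-resolution} and in Section~\ref{sec:proof}. The only caveats are bookkeeping ones you already flag: the exceptional curves of $\hat\varphi$ are the $\sum_k r_k$ components $\E_{kj}$ rather than $g-1$ curves, and the reduction of $\det(S)=1$ to the coprimality conditions is carried out in the paper by a case analysis under the $\q$ hypothesis using $r_{k-1}/r_k=\gcd(n_k,\lcm(n_{k+1},\ldots,n_g))$, which is the ``careful identification of factors'' you anticipate.
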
}

Once we have shown this result, it is easy to check that our surface singularities $(S,0) \subset (\C^{g+1},0)$ with $g\geq 2$ having a $\z$ link are of splice type. To this end, we will determine their \emph{splice diagram}. This is a finite tree in which every vertex has either valency~$1$, called a \emph{leaf}, or valency at least $3$, called a \emph{node}, and in which a weight is assigned to each edge starting at a node. Every dual graph of a normal surface singularity with a $\z$ link corresponds to a unique splice diagram of special type. Hence, such a splice diagram also determines and is determined by the link. Furthermore, if a splice diagram of a $\z$ link satisfies the so-called \emph{semigroup condition}, then Neumann and Wahl constructed in~\cite{NW1} an isolated complete intersection surface singularity having this link, called a \emph{singularity of splice type}. In addition, they conjectured that every normal complete intersection surface singularity with a $\z$ link is of splice type. To compute the splice diagram of our surface singularities having a $\z$ link, we will once more consider the good $\Q$-resolution $\hat{\varphi}: \hat{S} \rightarrow S$. We will see that the semigroup condition is fulfilled and that our surface singularities with a $\z$ link are always of splice type. In particular, they support the conjecture of Neumann and Wahl. \\

This article is organized as follows. We start in Section~\ref{sec:prelim} by briefly discussing the necessary background. In Section~\ref{sec:surfaces}, we will introduce our surface singularities $(S,0) \subset (\C^{g+1},0)$ in more detail, list the main properties of the considered good $\Q$-resolution of $(S,0)$, and use this resolution to show that $(S,0)$ is not Brieskorn-Pham for $g\geq 3$ and to show the conditions for its link to be a $\q$. In Section~\ref{sec:proof}, we will prove the characterization for $(S,0)$ to have a $\z$ link by computing its determinant, give some concrete examples in Example~\ref{ex:int-hom}, and show that our surface singularities with a $\z$ link are always of splice~type.\\

\noindent\textbf{Acknowledgement.} We would like to thank Andr\'as N\'emethi for  the suggestion to study normal surface singularities with an integral homology sphere link in the realm of weighted blow-ups, and we would like to thank Jonathan Wahl for pointing out the connection with the singularities of splice type. We would also like to thank Willem Veys, Enrique Artal Bartolo and Jos\'e~I.~Cogolludo-Agust\'in for initiating our collaboration.


\section{Preliminaries}\label{sec:prelim}

In this preliminary section, we give a short overview of the background needed in this article. We start by fixing some notation and conventions. First, by a \emph{(complex) variety}, we mean a reduced separated scheme of finite type over $\C$, which is not necessarily irreducible. A one-dimensional (resp. two-dimensional) variety is called a \emph{curve} (resp. \emph{surface}). Second, for a rational number $\frac{a}{b}$, we denote by $[\frac{a}{b}]$ its integer part. Third, for a set of integers $m_1,\ldots, m_r \in \Z$, we denote by $\gcd(m_1,\ldots, m_r)$ and $\lcm(m_1,\ldots, m_r)$ their greatest common divisor and lowest common multiple, respectively. To shorten the notation, we will sometimes use $(m_1,\ldots, m_r)$ for the greatest common divisor.  

\subsection{Space monomial curves with a plane semigroup}\label{sec:space-monomial}

Let $\mathcal C := {\{f = 0\} \subset (\C^2,0)}$ be an irreducible plane curve singularity defined by a complex series $f \in \C[[x_0,x_1]]$ with $f(0)=0$, and let \[\nu_{\mathcal C}: \frac{\C[[x_0,x_1]]}{(f)} \setminus \{0\} \longrightarrow \N: h \mapsto \dim_\C \frac{\C[[x_0,x_1]]}{(f,h)}\] be its associated valuation. The \emph{semigroup} $\Gamma(\mathcal C)$ is the image of this valuation and can be generated by a unique minimal system of generators $(\lbeta_0,\ldots,\lbeta_g)$ with $\lbeta_0 < \cdots < \lbeta_g$ and $\gcd(\lbeta_0, \ldots, \lbeta_g) = 1$. Furthermore, the sequence $(\lbeta_0,\ldots,\lbeta_g)$ determines and is determined by the topological type of $\mathcal C$, see for instance~\cite{Z}. Therefore, it is a natural question how one can recover the equation of a plane curve singularity from a given topological type. \\

In~\cite{T1}, Teissier provides a way to describe every plane curve singularity with given data $\Gamma(\mathcal C) = \langle \lbeta_0,\ldots,\lbeta_g \rangle$ as an equisingular deformation of the monomial curve $Y \subset (\C^{g+1},0)$ defined as the image of the monomial map $M:(\C,0) \rightarrow (\C^{g+1},0)$ given by $t \mapsto (t^{\bar{\beta}_0}, \ldots,t^{\bar{\beta}_g})$. This is an irreducible (germ of a) curve which has the `plane' semigroup $\Gamma(\mathcal C)$ as semigroup, which is smooth outside the origin, and which can be seen as a deformation of $\mathcal C$ in the following way. First, if we define the integers $e_i:=\gcd(\bar{\beta}_0,\ldots,\bar{\beta}_i)$ for ${i=0,\ldots,g}$ satisfying ${\lbeta_0 = e_0 > e_1 > \cdots > e_g = 1}$, and $n_i:=\frac{e_{i-1}}{e_i} \geq 2$ for $i=1,\ldots,g$, then $n_i\bar{\beta}_i$ for ${i=1,\ldots,g}$ is contained in the semigroup generated by $\lbeta_0,\ldots,\lbeta_{i-1}$. Hence, there exist non-negative integers $b_{ij}$ for $0 \leq j < i$ such that \[n_i\bar{\beta}_i=b_{i0}\bar{\beta}_0+\cdots +b_{i(i-1)}\bar{\beta}_{i-1},\] and these integers are unique under the extra condition that $b_{ij}<n_j$ for $j \neq 0$. For simplicity, we put $n_0 : =b_{10}$, and we state the following useful properties that we will use later on: 
\begin{enumerate}
	\item[(i)] for $i = 0,\ldots, g-1$, we have that $e_i = n_{i+1}\cdots n_g$; 
	\item[(ii)] for $i = 0,\ldots, g-1$, we have that $n_j \mid \lbeta_i$ for all $j > i$;
	\item[(iii)] for $i = 1,\ldots, g$, we have that $\gcd(\frac{\lbeta_i}{e_i},n_i) = \gcd(\frac{\lbeta_i}{e_i},\frac{e_{i-1}}{e_i}) = 1$, and, in particular, that $\gcd(n_0,n_1) = \gcd(\frac{\lbeta_1}{e_1},n_1) = 1$; and
	\item[(iv)] for $i = 1,\ldots, g$, we have that $n_i\lbeta_i < \lbeta_{i+1}$.
\end{enumerate} 
Using a \emph{minimal generating sequence} of the valuation $\nu_{\mathcal C}$, one can construct a family $\eta:(\chi,0) \subset (\C^{g+1}\times \C,0) \rightarrow (\C,0)$ of germs of curves in $(\C^{g+1} \times \C,0)$, which is \emph{equisingular} for instance in the sense that $\Gamma(\mathcal C)$ is the semigroup of all curves in the family. The generic fiber $\eta^{-1}(v)$ for $v \neq 0$ is isomorphic to $\mathcal C$, and the special fiber $\eta^{-1}(0)$ is defined in $(\C^{g+1},0)$ by the equations $x_i^{n_i} - c_ix_0^{b_{i0}} \cdots x_{i-1}^{b_{i(i-1)}} = 0$ for $i = 1, \ldots, g$. The coefficients $c_i$ are needed to see that any irreducible plane curve singularity with semigroup $\Gamma(\mathcal C)$ is an equisingular deformation of such a monomial curve. However, for simplicity, we can assume that every $c_i = 1$, which is always possible after a suitable change of coordinates. This yields the monomial curve $Y$. \\

Clearly, we can also consider the global curve in $\C^{g+1}$ defined by the above binomial equations; from now on, we define \emph{a (space) monomial curve} $Y \subset \C^{g+1}$ as the complete intersection curve given by
	\begin{equation} \label{eq:equations-Y}
		\renewcommand{\arraystretch}{1.3}{\left\{\begin{array}{r c l l}
		f_1:= x_1^{n_1} & - & x_0^{n_0}  &  = 0 \\
		f_2:= x_2^{n_2}  &- & x_0^{b_{20}}x_1^{b_{21}} &= 0  \\
		& \vdots & &\\
		f_g := x_g^{n_g} &- & x_0^{b_{g0}}x_1^{b_{g1}}\cdots x_{g-1}^{b_{g(g-1)}} & = 0.\\
    	\end{array}\right.}
	\end{equation}
This is still an irreducible curve which is smooth outside the origin. In~\cite{MVV1}, the \emph{monodromy eigenvalues} for such a space monomial curve $Y \subset \C^{g+1}$ with $g\geq 2$ are investigated by considering $Y$ as a Cartier divisor a \emph{generic embedding surface} $S \subset \C^{g+1}$. Together with the results from~\cite{MVV2}, this yields a proof of the \emph{monodromy conjecture} for $Y \subset \C^{g+1}$. In this article, we are interested in the topology of these generic embedding surface singularities $(S,0) \subset (\C^{g+1},0)$. We will introduce them in detail in Section~\ref{sec:surfaces}.

\subsection{Link of a normal surface singularity}

Let $(S,0) \subset (\C^n,0)$ be a germ of a normal surface singularity. Its \emph{link} $L(S,0)$ is an oriented three-dimensional manifold which is defined as the intersection of~$S$ with a small enough closed ball centered at the origin in $\C^n$. In this article, we are interested in normal surface singularities whose link is a \emph{rational (resp. integral) homology sphere}, that is, whose link has the same rational (resp. integral) homology as a three-dimensional sphere. In this case, we will say that the link is a $\q$ (resp. a $\z$). To study when the link $L(S,0)$ is a $\q$ or a $\z$, we can make use of a practical criterion in terms of the determinant and a good resolution of~$(S,0)$. \\

By a \emph{good resolution} of $(S,0)$, we mean a proper birational morphism $\pi: \tilde S \rightarrow S$ from a smooth surface $\tilde S$ to $S$ which is an isomorphism over $S\setminus \{0\}$ and whose exceptional locus $\pi^{-1}(0)$ is a simple normal crossing divisor (i.e., its irreducible components, called the \emph{exceptional curves}, are smooth and intersect normally). It is well known that such a resolution always exists as a sequence of blow-ups at well-chosen points. A good resolution $\pi:\tilde S \rightarrow S$ is called \emph{minimal} if every other good resolution of $(S,0)$ factors through $\pi$. Equivalently, $\pi$ is minimal if there is no exceptional curve that can be contracted (by blowing down) so that the resulting morphism is still a good resolution of $(S,0)$. It is worth mentioning that, by Castelnuovo's Contractibility Theorem, the only possible exceptional curves that can be contracted in such a way are rational and have self-intersection number~$-1$. Furthermore, a minimal good resolution of a normal surface singularity $(S,0)$ always exists and is unique up to isomorphism. Therefore, we call it \emph{the minimal good resolution} of~$(S,0)$. \\

With a good resolution of $(S,0)$, we can associate a \emph{dual graph} $\Gamma$ whose vertices correspond to the exceptional curves $E_1,\ldots, E_r$, and two vertices $E_i$ and $E_j$ are connected by an edge if and only if $E_i \cap E_j \neq \emptyset$. Often, each vertex $E_i$ is labeled with two numbers $(g_i,-\kappa_i)$, where $g_i$ is the genus of $E_i$ and $-\kappa_i$ its self-intersection number. It is a classical result that the free part of $H_1(L(S,0),\Z)$ has rank $2\sum_{i=1}^rg_i + b,$ where $b$ is the number of loops in the dual graph, and that its torsion part is equal to $\text{coker}(A)$, where $A = (E_i \cdot E_j)_{ 1 \leq i,j \leq r}$ is the intersection matrix of the good resolution, see for example~\cite[Ch. 2, Prop. 3.4]{Dim}. In particular, as $A$ is negative definite, which was originally noted by DuVal but also shown by Mumford in~\cite{Mum}, the torsion part of $H_1(L(S,0),\Z)$ is a finite group of order $\vert \det(A) \vert = \det(-A)$. \\

This result has two immediate consequences. First, it implies that $\det(-A)$ is independent of the chosen good resolution of $(S,0)$. Hence, we can define the \emph{determinant} of $(S,0)$ as $\det(S) := \det(-A)$ with $A$ the intersection matrix of any good resolution of $(S,0)$. Second, we find the following easy conditions for $(S,0)$ to have a $\q$ or $\z$ link.

\begin{theorem}\label{thm:conditions-rat-int}
Let $(S,0) \subset (\C^n,0)$ be a normal surface singularity, and consider a good resolution $\pi: \tilde S \rightarrow S$. The link of $(S,0)$ is a $\q$ if and only if all exceptional curves of $\pi$ are rational and the dual graph $\Gamma$ of $\pi$ is a tree. The link of $(S,0)$ is a $\z$ if and only if it is a $\q$ and $\det(S) = 1$.
\end{theorem}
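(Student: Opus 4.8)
The plan is to reduce the two topological conditions to statements about the single group $H_1(L(S,0),\Z)$, and then read off the answer from the description of this group recalled above.

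First I would record that $L(S,0)$ is a closed, oriented, connected three-manifold: it is closed and oriented since it is the link of an isolated normal surface singularity, and it is connected because a normal local ring is a domain, so $(S,0)$ is analytically irreducible. For such a manifold $M$, Poincar\'e duality together with the universal coefficient theorem give $H_0(M,\Z) = H_3(M,\Z) = \Z$ and $H_2(M,\Z) \cong \mathrm{Hom}(H_1(M,\Z),\Z)$. Hence $M$ is a $\q$ precisely when $H_1(M,\Q) = 0$, i.e.\ when $H_1(M,\Z)$ is finite, and $M$ is a $\z$ precisely when $H_1(M,\Z) = 0$.

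Next I would invoke the structure of $H_1(L(S,0),\Z)$ recalled above: its free part has rank $2\sum_{i=1}^r g_i + b$, where $b$ is the number of independent loops (first Betti number) of the dual graph $\Gamma$, and its torsion subgroup is $\mathrm{coker}(A)$, which is finite of order $\det(S) = \det(-A)$ since $A$ is negative definite. Therefore $H_1(L(S,0),\Z)$ is finite if and only if $2\sum_{i=1}^r g_i + b = 0$, that is, if and only if every exceptional curve $E_i$ is rational ($g_i = 0$) and $b = 0$; and since the exceptional fiber of a resolution of a normal singularity is connected, $\Gamma$ is connected, so $b = 0$ is equivalent to $\Gamma$ being a tree. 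This proves the first equivalence. For the second, $H_1(L(S,0),\Z) = 0$ means both that its free part vanishes — i.e.\ the link is a $\q$ by the previous step — and that its torsion subgroup $\mathrm{coker}(A)$ is trivial; as this group has order $\det(S)$, it is trivial exactly when $\det(S) = 1$. This gives the second equivalence.

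There is no serious obstacle here: the statement is essentially a repackaging of the homology computation quoted above. The only points needing a little care are the use of Poincar\'e duality to see that controlling $H_1$ suffices for both notions, and recalling that the link, hence $\Gamma$, is connected, so that the absence of loops upgrades to $\Gamma$ being a tree.
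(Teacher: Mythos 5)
Your argument is correct and is essentially the proof the paper intends: the paper states this theorem as an immediate consequence of the recalled facts that the free part of $H_1(L(S,0),\Z)$ has rank $2\sum_i g_i + b$ and that the torsion part is $\operatorname{coker}(A)$, of order $\det(-A)$. You simply make explicit the routine Poincar\'e duality step showing that the $\q$ and $\z$ conditions reduce to $H_1$ being finite, respectively trivial, and the connectedness of the exceptional fiber needed to upgrade $b=0$ to $\Gamma$ being a tree.
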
 

To compute the determinant of $(S,0)$ in practice, we do not really need a good resolution of $(S,0)$: if $\pi:\tilde S \rightarrow S$ is a proper birational morphism from a normal surface $\tilde S$ to $S$ which is an isomorphism over $S \setminus \{0\}$, then
\begin{equation}\label{eq:det-bir-morphism}
	\det(S)  =  \det(-A) \prod_{p \in \pi^{-1}(0)} \det(\tilde S,p),
\end{equation}
see for instance~\cite[Lemma 4.7]{ACM}. Here, $A = (E_i\cdot E_j)_{1\leq i,j \leq r}$ is the intersection matrix of $\pi$, where $E_1,\ldots, E_r$ are the exceptional curves of $\pi$, and $\det(\tilde S,p)$ is the absolute value of the determinant of the intersection matrix of some good resolution at $p$. Note that if $p \in \pi^{-1}(0)$ is written as a \emph{Hirzebruch-Jung singularity of type} $\frac{1}{d}(1,q)$ with $d$ and $q$ coprime, then $\det(\tilde S,p) = d$, see Section~\ref{sec:quotient-sing}. 

\subsection{Brieskorn-Pham surface singularities}\label{sec:Brieskorn-Pham}

An important family of normal surface singularities whose link is a $\q$ or $\z$ are Brieskorn-Pham surface singularities \[S(a_1,a_2,a_3) := \{F_{(a_1,a_2,a_3)} = x_1^{a_1} + x_2^{a_2} + x_3^{a_3} = 0\} \subset (\C^3,0)\] satisfying some conditions in terms of the exponents $a_i\geq 2$. The most classical characterization uses a graph $G(a_1,a_2,a_3)$ associated with these exponents, see for example~\cite[Satz 1]{Bri} or \cite[Ch. 3, Thm. 4.10]{Dim}. In~\cite[Prop. 5.1]{ACM}, an equivalent characterization is obtained by considering Brieskorn-Pham surface singularities as a special case of \emph{weighted L\^e-Yomdin singularities}. More precisely, put $e := \gcd(a_1,a_2,a_3)$ and $\alpha_l := \frac{1}{e}\gcd(a_i,a_j)$ for every $\{i,j,l\} = \{1,2,3\}$. Then, $F_{(a_1,a_2,a_3)}$ is $\omega$-weighted homogeneous with \[\omega := \frac{1}{e^2\alpha_1\alpha_2\alpha_3}(a_2a_3,a_1a_3,a_1a_2),\] and $S(a_1,a_2,a_3)$ can be seen as an $(\omega,k)$-weighted L\^e-Yomdin singularity for any ${k\geq 1}$. Following the approach in~\cite[4.3]{ACM} for weighted L\^e-Yomdin singularities, we can consider the curve $C := \{x_1^{e\alpha_2\alpha_3} + x_2^{e\alpha_1\alpha_3} + x_3^{e\alpha_1\alpha_2} = 0\}$ in the \emph{weighted projective plane} $\P^2_{(\alpha_1,\alpha_2,\alpha_3)}$ (see Section~\ref{sec:quotient-sing}), which has genus \[\frac{e^2\alpha_1\alpha_2\alpha_3 - e(\alpha_1+\alpha_2+\alpha_3) + 2}{2}.\] Furthermore, the determinant of $S(a_1,a_2,a_3)$ is given by \[ed_1^{e\alpha_1-1}d_2^{e\alpha_2-1}d_3^{e\alpha_3-1},\] where $d_i := \frac{a_i}{e\alpha_j\alpha_l}$ for $\{i,j,l\} = \{1,2,3\}$. Now, $S(a_1,a_2,a_3)$ has a $\q$ (resp. $\z$) link if and only if the above genus is equal to $0$ (resp. and the determinant is equal to $1$). This yields the following result.

\begin{prop}\label{prop:Brieskorn-Pham-rat-int}
Using the above notations, the link of a Brieskorn-Pham surface singularity $S(a_1,a_2,a_3) \subset (\C^3,0)$ is a $\q$ if and only if either $\alpha_1 = \alpha_2 = \alpha_3 = 1$ and $e = 2$, or $\alpha_i = \alpha_j  = e = 1$ for some $i \neq j$. It is a $\z$ if and only if the exponents $a_1,a_2$ and $a_3$ are pairwise coprime.
\end{prop}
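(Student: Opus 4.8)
The plan is to work directly from the data assembled in the paragraph preceding the statement, namely the explicit genus formula $\frac{e^2\alpha_1\alpha_2\alpha_3 - e(\alpha_1+\alpha_2+\alpha_3) + 2}{2}$ for the projective curve $C\subset\P^2_{(\alpha_1,\alpha_2,\alpha_3)}$, the determinant expression $e\,d_1^{e\alpha_1-1}d_2^{e\alpha_2-1}d_3^{e\alpha_3-1}$, and the two ``if and only if'' statements from Theorem~\ref{thm:conditions-rat-int}: the link is a $\q$ iff the genus vanishes, and it is a $\z$ iff additionally the determinant equals~$1$. So the proposition is really a pair of elementary number-theoretic characterizations of when those two quantities take their extreme values, and the proof is just carrying those out.

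For the $\q$ part I would first observe that $g(C)=0$ is equivalent to $e^2\alpha_1\alpha_2\alpha_3 - e(\alpha_1+\alpha_2+\alpha_3) + 2 = 0$, i.e. $e(\alpha_1\alpha_2\alpha_3 \cdot e - \alpha_1-\alpha_2-\alpha_3) = -2$, so $e\in\{1,2\}$. If $e=2$, we need $2\alpha_1\alpha_2\alpha_3 - \alpha_1-\alpha_2-\alpha_3 = -1$; writing it as $\alpha_1(2\alpha_2\alpha_3-1) = \alpha_2+\alpha_3-1$ and using $\alpha_i\ge1$ forces $\alpha_1=\alpha_2=\alpha_3=1$ (any $\alpha_i\ge2$ makes the left side too large), which is one of the listed cases. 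If $e=1$, we need $\alpha_1\alpha_2\alpha_3 - \alpha_1-\alpha_2-\alpha_3 = -2$; by symmetry assume $\alpha_1\le\alpha_2\le\alpha_3$, rewrite as $\alpha_3(\alpha_1\alpha_2-1) = \alpha_1+\alpha_2-2$, and note $\alpha_1\alpha_2-1\ge 0$ with equality iff $\alpha_1=\alpha_2=1$, in which case the right side is also $0$ and $\alpha_3$ is free; if $\alpha_1\alpha_2\ge2$ then $\alpha_3\ge\alpha_2$ gives $\alpha_2(\alpha_1\alpha_2-1)\le\alpha_1+\alpha_2-2$, which is incompatible with $\alpha_1,\alpha_2\ge1$ (a quick case check on $\alpha_1\in\{1,2\}$ finishes it). Hence the $e=1$ solutions are exactly $\alpha_i=\alpha_j=1$ for some pair $i\ne j$, matching the statement. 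The converse direction is immediate by plugging the listed values back into the genus formula.

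For the $\z$ part I would assume we are already in one of the two $\q$ cases and then impose $\det = e\,d_1^{e\alpha_1-1}d_2^{e\alpha_2-1}d_3^{e\alpha_3-1} = 1$, which over the positive integers forces $e=1$, hence the case $\alpha_i=\alpha_j=1$ for some $i\ne j$; relabel so $\alpha_1=\alpha_2=1$. Then the exponents $d_1^{e\alpha_1-1}=d_1^0=1$ and $d_2^0=1$ drop out and we are left with $d_3^{\alpha_3-1}=1$, i.e. $\alpha_3=1$ or $d_3=1$. Unwinding the definitions $\alpha_l = \gcd(a_i,a_j)$ (since $e=1$) and $d_i = a_i/(\alpha_j\alpha_l)$: the condition $\alpha_1=\alpha_2=1$ says $\gcd(a_2,a_3)=\gcd(a_1,a_3)=1$, and then $d_1=a_1/\alpha_3=a_1/\gcd(a_1,a_2)$, so $d_3 = a_3/(\alpha_1\alpha_2)=a_3$, forcing $a_3=1$ — but $a_3\ge2$, so the $d_3=1$ branch is impossible and we must have $\alpha_3=\gcd(a_1,a_2)=1$; combined with the other two gcd conditions this is precisely pairwise coprimality of $a_1,a_2,a_3$. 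Conversely, if the $a_i$ are pairwise coprime then all $\alpha_l=1$, $e=1$, $d_i=a_i$, the genus formula evaluates to $0$ (this is the case $\alpha_1=\alpha_2=\alpha_3=1$, $e=1$, which indeed gives genus $\frac{1-3+2}{2}=0$) and the determinant is $a_1^0a_2^0a_3^0=1$, so the link is a $\z$.

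The only mildly delicate point is bookkeeping: the characterization derived from~\cite[Prop. 5.1]{ACM} is stated for $k\ge1$ arbitrary, and one should confirm that the genus and determinant formulas quoted are genuinely independent of $k$ as the surrounding text asserts, so that applying them with, say, $k=1$ is legitimate; granting that (it is stated in the excerpt), everything reduces to the finite case analysis above. I do not expect any real obstacle — the arguments are short Diophantine arguments with $\alpha_i\ge1$, $a_i\ge2$ — but the main thing to be careful about is not to conflate the two distinct $\q$-cases when reading off the $\z$ condition, since the determinant formula behaves quite differently in the $e=2$ case (where it is automatically $\ge2$) versus the $e=1$ case.
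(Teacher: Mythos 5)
Your proposal is correct and follows exactly the route the paper intends: the paper itself gives no written-out proof beyond asserting that the quoted genus and determinant formulas together with Theorem~\ref{thm:conditions-rat-int} ``yield the following result,'' and your contribution is simply to carry out the elementary Diophantine case analysis (genus $=0$ forces $e\in\{1,2\}$ and the two listed $\alpha$-configurations; determinant $=1$ then forces $e=1$ and, since $d_3=a_3\geq 2$ rules out the $d_3=1$ branch, all three pairwise gcds equal to $1$), which checks out. The one point you flag as delicate --- independence of $k$ --- is indeed just a bookkeeping matter already asserted in the surrounding text, so nothing is missing.
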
 

\begin{remark}\label{rem:Brieskorn-Pham}
In fact, we do not really need the theory of weighted L\^e-Yomdin singularities and the results from~\cite[4.3]{ACM} to obtain this result. Alternatively, one could directly consider Theorem~\ref{thm:conditions-rat-int} with the partial resolution of $S(a_1,a_2,a_3)$ consisting of one weighted blow-up at the origin with weight vector $\omega$. This resolution has one exceptional curve $\E \subset \P^2_\omega$ which is isomorphic to the curve $C \subset \P^2_{(a_1,a_2,a_3)}$, and which contains three sets of singular points, corresponding to the coordinate axes in $\P^2_\omega$. This gives the same genus and determinant as above. For more details, see~\cite[Example 3.6]{Ma1}.
\end{remark}

\subsection{Singularities of splice type}\label{sec:splice-type}

Since we know how to recover all plane curve singularities of a given topological type, it is natural to ask whether this is also possible for surface singularities with a given link. Unfortunately, this question is still open, even if the link is a $\q$ or $\z$. Here, we restrict to briefly explaining some results and conjectures in the $\z$ case. For more details, see~\cite{NW1} and~\cite{NW2}. \\

With a $\z$ link, we can associate a unique \emph{splice diagram}, originally introduced by Siebenmann~\cite{Sie}. This is a finite tree in which every vertex has either valency~$1$, called a \emph{leaf}, or valency at least $3$, called a \emph{node}, and in which a weight is assigned to each edge starting at a node. In~\cite{EN}, Eisenbud and Neumann showed that the links of normal surface singularities that are a $\z$ are in one-one correspondence with splice diagrams satisfying the following~conditions: 

\begin{enumerate}
	\item [(i)] the weights around a node are positive and pairwise coprime;
	\item [(ii)] the weight on an edge connecting a node with a leaf is greater than $1$; and 
	\item [(iii)] all \emph{edge determinants} are positive. 
\end{enumerate}

Here, the edge determinant for an edge connecting two nodes is the product of the two weights on the edge minus the product of the weights adjacent to the edge (i.e., the other weights around the two nodes). \\

The dual graph $\Gamma$ of a normal surface singularity with a $\z$ link yields a unique splice diagram $\Delta$ as follows. First, we suppress all vertices with valency~$2$. Then, for each edge $e$ starting at some node $v$, its weight $d_{ve}$ is the absolute value of the determinant of the intersection matrix of the subgraph $\Gamma_{ve}$ of $\Gamma$ obtained from cutting at $v$ in the direction of $e$. The other way around, one can obtain the dual graph $\Gamma$, and, hence, the link, from the splice diagram $\Delta$ by splicing or plumbing. For the details of this construction, we refer to~\cite{EN} or, for an easier method, to~\cite{NW2}. \\

In~\cite{NW1}, Neumann and Wahl constructed for a $\z$ link whose splice diagram satisfies the so-called \emph{semigroup condition} an isolated complete intersection surface singularity $(S,0)$ with this link, called a \emph{complete intersection singularity of splice type}. Up to date, there are no known examples of normal complete intersection surface singularities with a $\z$ link whose splice diagram does not satisfy the semigroup condition, or that are not of splice type. Therefore, Neumann and Wahl conjectured that every normal complete intersection surface singularity with a $\z$ link is of splice type (in particular, its splice diagram satisfies the semigroup condition). Earlier, in~\cite{NW}, Neumann and Wahl already conjectured the same for every Gorenstein normal surface singularity with a $\z$ link, but Luengo-Velasco, Melle-Hern\'{a}ndez and N\'{e}methi~\cite{LMN} found counterexamples to this conjecture. Furthermore, even when the splice diagram satisfies the semigroup condition, there always exist plenty of other analytic types (probably not complete intersections) with the same link. \\

Let us take a brief look at this semigroup condition and how to write the equations of the associated singularity of splice type. Consider a splice diagram~$\Delta$. For any vertices $v$ and $w$ of $\Delta$, we define the \emph{linking number} $l_{vw}$ of $v$ and $w$ as the product of all weights adjacent to, but not on, the shortest path from $v$ to $w$, and the number $l'_{vw}$ as the same product in which we omit the weights around $v$ and $w$. Using this notation, $\Delta$ is said to satisfy the \emph{semigroup condition} if and only if for every node $v$ and edge $e$ starting at $v$, the weight $d_{ve}$ is contained in the semigroup $\langle l'_{vw} \mid w \text{ is a leaf of } \Delta \text{ in } \Delta_{ve}\rangle \subset \N,$ where $\Delta_{ve}$ is the subgraph of $\Delta$ cut off from $v$ by $e$. If $\Delta $ satisfies this semigroup condition, we can associate \emph{admissable monomials} with each node $v$ and an edge $e$ starting at $v$ as follows. Relate to each leaf $w$ of $\Delta$ a variable $z_w$ and give it $v$-weight $l_{vw}$. Because $\Delta$ satisfies the semigroup condition, we can find (possibly non-unique) integers $\alpha_{vw} \in \N$ such that \[d_{ve} = \sum_{w \text{ leaf in } \Delta_{ve}} \alpha_{vw}l'_{vw}.\] Then, an admissable monomial associated with $v$ and $e$ is any monomial \[\prod_{w \text{ leaf in } \Delta_{ve}}z_w^{\alpha_{vw}}\] Note that its $v$-weight is equal to the product of all weights around $v$, also denoted by $d_v$. If the node $v$ has valency $\delta_v$, then we choose for every edge $e$ at $v$ one admissable monomial $M_{ve}$ and we make a system of $\delta_v - 2$ linear equations of the form \[\sum_{e \text{ edge at }v} a_{ie}M_{ve}, \qquad i = 1,\ldots, \delta_v - 2,\] where the coefficients $a_{ie}$ are chosen such that all maximal minors of the matrix $(a_{ie})_{i,e}$ have full rank. Finally, in each equation, we can add higher order terms with respect to the weights $l_{vw}$. The total number of equations is equal to $n-2$, where $n$ is the number of leaves in $\Delta$, and these equations define an isolated complete intersection singularity in $\C^n$ of \emph{splice type}. If one does not allow higher order terms, it is said to be of \emph{strict splice type}.

\begin{ex}\label{ex:Brieskorn-compl-int}
Consider a splice diagram with a single node $v$ of valency $n$. In this case, the semigroup condition is trivially fulfilled: $l'_{vw} = 1$ for any leaf $w$ of $\Delta$. Hence, each edge $e_j$ for $j = 1,\ldots, n$ with weight $d_j$ corresponds to a unique admissable monomial $M_j = z_j^{d_j}$, and the equations of strict splice type are of the form \[\sum_{j=1}^{n} a_{ij}z_j^{d_j}, \qquad i = 1,\ldots, n-2,\] where all maximal minors of the matrix $(a_{ij})_{i,j}$ have full rank. Equations of this type define isolated \emph{Brieskorn-Pham complete intersections} in $\C^{n}$, which are  generalizations of the Brieskorn-Pham surface singularities. 
\end{ex}

For other examples of this construction, we refer to~\cite{NW1} and~\cite{NW2}. In Section~\ref{sec:our-surfaces-splice-type}, we will check the semigroup condition and write the equations of strict splice type for our normal surface singularities having a $\z$ link. In particular, we will see that they are of splice type. Hence, they support the conjecture of Neumann and Wahl. 

\subsection{Quotient singularities and $\Q$-resolutions}\label{sec:quotient-sing}

To determine the conditions under which our surface singularities $(S,0)$ have a  $\q$ or $\z$ link, we will make use of a \emph{good $\Q$-resolution} of $(S,0)$. Roughly speaking, this a resolution in which the final ambient space can have abelian quotient singularities, and the exceptional divisor must have normal crossings on such a variety. In this section, we give a short introduction to quotient singularities and $\Q$-resolutions. We also touch briefly on an intersection theory on surfaces with abelian quotient singularities. More details can be found in~\cite{AMO1} and~\cite{AMO2}. \\

Consider an abelian quotient space $\C^n/G$ for $G \subset GL(n,\C)$ a finite abelian group. If we write $G = \mu_{d_1} \times \cdots \times \mu_{d_r}$ as a product of finite cyclic groups, where $\mu_{d_i}$ is the group of the $d_i$th roots of unity, then there exists a matrix $A = (a_{ij})_{i,j} \in \mathbb Z^{r\times n}$ such that $\C^n/G$ is isomorphic to the quotient of $\C^n$ under the action $(\mu_{d_1} \times \cdots \times \mu_{d_r})\times \C^n \rightarrow \C^n$ defined by $((\xi_1,\ldots,\xi_r) , (x_1,\ldots, x_n))  \mapsto (\xi_{d_1}^{a_{11}} \cdots \xi_{d_r}^{a_{r1}}\, x_1,\, \ldots\, , \xi_{d_1}^{a_{1n}} \cdots \xi_{d_r}^{a_{rn}}\, x_n )$.
This is called the \emph{quotient space of type $(\d,A)$}, where $\d := (d_1,\ldots, d_r)$, and denoted by \[ X(\d; A) := X \left( \begin{array}{c|ccc} d_1 & a_{11} & \cdots & a_{1n}\\ \vdots & \vdots & \ddots & \vdots \\ d_r & a_{r1} & \cdots & a_{rn} \end{array} \right).\] Note that we can always consider the $i$th row of $A$ modulo $d_i$. The class of an element ${\x := (x_1,\ldots, x_n) \in \C^n}$ under such an action $(\d;A)$ is denoted by $[\x]_{(\d;A)}$, where we omit the subindex if there is no possible confusion. Every quotient space $X(\d;A)$ is a normal irreducible $n$-dimensional variety whose singular locus is of codimension at least two and is situated on the \emph{coordinate hyperplanes} $\{x_i = 0\}$ for $i = 1,\ldots, n$, which are the images of the coordinate hyperplanes $\{x_i = 0\}$ in $\C^n$ under the natural projection $\C^n \rightarrow X(\d;A)$. \\

If $n = 2$, then one can show that each quotient space $X(\d;A) = \C^2/G$ is \emph{cyclic}, that is, it is isomorphic to a quotient space of type $(d;a,b)$. A cyclic type $(d;a,b)$ is said to be \emph{normalized}, and the corresponding quotient space $X(d;a,b)$ is said to be \emph{written in a normalized form}, if and only if $\gcd(d,a) = \gcd(d,b) = 1$. If this is not the case, we can normalize $X(d;a,b)$ as follows. First, we can assume that $\gcd(d,a,b) = 1$ as $X(d;a,b)$ is isomorphic under the identity morphism to $X(\frac{d}{k};\frac{a}{k},\frac{b}{k})$ for any $k$ dividing $d,a$ and $b$. Second, for $k$ dividing $d$ and $b$, the morphism defined by $[(x_1,x_2)]\mapsto [(x_1^k,x_2)]$ induces an isomorphism $X(d;a,b) \simeq X(\frac{d}{k};a,\frac{b}{k})$, and similarly for some $k$ dividing $d$ and $a$. Hence, $X(d;a,b)$ can be normalized with the isomorphism \[X(d;a,b)  \longrightarrow X \bigg( \frac{d}{(d,a)(d,b)}; \frac{a}{(d,a)}, \frac{b}{(d,b)}\bigg): [(x_1,x_2)] \mapsto \big[ (x_1^{(d,b)},x_2^{(d,a)}) \big].\] For general $n \geq 1$, we call a (not necessarily cyclic) type $(\d;A)$ \emph{normalized} if $\mu_{\d}$ is a small subgroup of $GL(n,\C)$ (i.e., it does not contain rotations around hyperplanes other than the identity) acting freely on $(\C^{\ast})^n$ or, equivalently, if for all $\x \in \C^n$ with exactly $n-1$ coordinates different from $0$, the stabilizer subgroup is trivial. It is possible to convert any type into a normalized form. \\

For $n = 2$, we can simplify a normalized type $(d;a,b)$ even further. More precisely, as $\gcd(d,a) = 1$, there exists an integer $a' \in \Z$ with $\gcd(d,a') = 1$ such that $aa' \equiv 1 \mod d$. Then, the space $X(d;a,b)$ is isomorphic to $X(d;a'a,a'b) = X(d;1,a'b)$ under the identity morphism. In other words, every two-dimensional quotient space singularity $(X(\d;A),[0])$ is a \emph{Hirzebruch-Jung singularity} $(\C^2/\mu_d,0)$ where the action of $\mu_d$ on $\C^2$ is given by $(\xi,(x_1,x_2)) \mapsto (\xi x_1,\xi^qx_2)$ for some integer $q \in \{1,\ldots, d-1\}$ with $\gcd(d,q) = 1$. This is called a \emph{Hirzebruch-Jung singularity of type} $\frac{1}{d}(1,q)$. Similarly, we could start with an integer $b' \in \Z$ such that $\gcd(d,b') = 1$ and $bb' \equiv 1 \mod d$. In this case, we find that $X(d;a,b)$ is isomorphic to $X(d;q',1)$, where $q'q \equiv 1 \mod d$. In other words, a Hirzebruch-Jung singularity of some type $\frac{1}{d}(1,q)$ is always equal to the Hirzebruch-Jung singularity of type $\frac{1}{d}(q',1)$ for $q' \in \{1,\ldots, d-1\}$ the unique solution of $qq' \equiv 1 \mod d$. It is well known that the minimal good resolution of a Hirzebruch-Jung singularity has only rational exceptional curves and a bamboo-shaped (i.e., linear) dual graph 
\begin{figure}[ht]
\includegraphics{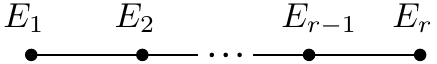}
\end{figure} 

\noindent Furthermore, the self-intersection number $-\kappa_i$ of $E_i$ for $i = 1,\ldots r$ with $\kappa_i \in \N_{\geq 2}$ can be computed from the continued fraction expansion \[\frac{d}{q} = \kappa_r - \frac{1}{\kappa_{r-1} - \frac{1}{\kappa_{r-2} - \cdots}},\] and the positive integers $d$, $q$ and $q'$ are the absolute value of the determinant of the intersection matrix of all exceptional curves, of $E_1,\ldots, E_{r-1}$, and of $E_2,\ldots, E_r$, respectively. \\

Before we can give the precise definition of a good $\Q$-resolution, we still need to introduce two notions: $V$-manifolds and $\Q$-normal crossing divisors. In~\cite{Sat}, a \emph{$V$-manifold} of dimension $n$ was introduced as a complex analytic space admitting an open covering $\{U_i\}$ in which each $U_i$ is analytically isomorphic to some quotient $B_i/G_i$ for $B_i \subseteq \C^n$ an open ball and $G_i$ a finite subgroup of $GL(n,\C)$. We consider $V$-manifolds in which every $G_i$ is a finite abelian subgroup of $GL(n,\C)$, which are normal varieties that can locally be written like $X(\d;A)$. An important example of a $V$-manifold is the \emph{weighted projective space $\mathbb{P}^n_\omega$ of type $\omega$} for some weight vector $\omega = (p_0,\ldots, p_n)$ of positive integers which is defined as the quotient of $\C^{n+1}\setminus\{0\}$ under the action $\C^{\ast} \times (\C^{n+1}\setminus \{0\}) \rightarrow \C^{n+1}\setminus \{0\}$ given by  $(t,(x_0,\ldots,x_n)) \mapsto (t^{p_0}x_0,\ldots, t^{p_n}x_n).$ A two-dimensional $V$-manifold with abelian quotient singularities is also called a \emph{$V$-surface}. A \emph{$\Q$-normal crossing divisor} on a $V$-manifold $X$ is a hypersurface $D$ that is locally isomorphic to the quotient of a normal crossing divisor under an action $(\d;A)$. More precisely, for every point $p \in X$, there exists an isomorphism of germs $(X,p) \simeq (X(\d;A),[0])$ such that $(D,p) \subseteq (X,p)$ is identified with a germ of the form \[(\{[\x] \in X(\d;A)\mid x_1^{m_1}\cdots x_k^{m_k} = 0\},[0]).\] This notion was introduced in~\cite{Ste}. 

\begin{remark}
	In modern language, one usually calls a $V$-manifold an \emph{orbifold}. We keep saying $V$-manifold in this article to emphasize that we follow Steenbrink's approach.
\end{remark}

We can now define a \emph{good $\Q$-resolution} for a germ $(X,0)$ of an isolated singularity as a proper birational morphism $\pi: \tilde{X} \rightarrow X$ such that the following properties hold:
\begin{enumerate}
	\item[(i)] $\tilde{X}$ is a $V$-manifold with abelian quotient singularities;
	\item[(ii)] $\pi$ is an isomorphism over $X \setminus \{0\}$; and
	\item[(iii)] the exceptional divisor $\pi^{-1}(0)$ is a $\Q$-normal crossing divisor on $\tilde{X}$.
\end{enumerate}
For $(Y,0) \subset (X,0)$ a subvariety of codimension one, an \emph{embedded $\Q$-resolution} is a proper birational morphism $\pi: \tilde X \rightarrow X$ with the above three properties in which $X \setminus \{0\}$ is replaced by $X \setminus \text{Sing}(Y)$, and $\pi^{-1}(0)$ by $\pi^{-1}(Y)$. As for a classical good or embedded resolution, we can use the construction of blowing up to compute a good or embedded $\Q$-resolution, but in this case, we use \emph{weighted blow-ups}. Although weighted blow-ups can be placed in the realm of toric resolutions, we follow the approach in~\cite{AMO1} and~\cite{AMO2}. \\

We end this section by briefly discussing an intersection theory on surfaces with abelian quotient singularities. On normal surfaces, an intersection theory was first defined by Mumford~\cite{Mum} and further developed by Sakai~\cite{Sak}; a general intersection theory can be found in~\cite{Ful}. For $V$-manifolds of dimension $2$, which are normal surfaces, an equivalent definition was given in~\cite{AMO2}. Here, we focus on explaining the definitions and properties presented in the latter article that are needed in the present article. First of all, on a $V$-surface $S$, the notions of \emph{Weil and Cartier divisor} coincide after tensoring with $\Q$. More precisely, for every Weil divisor $D$ on $S$, there exists an integer $k \in \Z$ such that $kD$ is locally principal. Therefore, we call the class of divisors on $S$ with rational coefficients modulo linear equivalence the \emph{$\Q$-divisors} on $S$, and we can develop a \emph{rational} intersection theory. In this article, we will only need to compute the \emph{local intersection number} $(D_1\cdot D_2)_p$ of two $\Q$-divisors $D_1$ and $D_2$ at a point $p \in S$. For this purpose, we assume that $p$ is the origin $[0]$ in a normalized cyclic quotient space $X(d;a,b)$, that $D_i = \{f_i = 0\}$ for $i = 1,2$ is given by a reduced polynomial in $\C[x,y]$, that the support of $D_1$ is not contained in the support of $D_2$, and that $D_1$ is irreducible. In this case, the local intersection number at $p$ is well-defined and given by 
\begin{equation} \label{eq:local-intersection-number}
	(D_1\cdot D_2)_p := \frac{1}{d}\dim_{\C}\left(\frac{\C\{x,y\}}{\langle f_1,f_2\rangle}\right) \in \Q.
\end{equation}
Another property of the intersection product that we will use is that for $\pi: \tilde X \rightarrow X(d;a,b)$ a weighted blow-up at the origin with exceptional divisor $E$, and for $D$ a $\Q$-divisor on $X(d;a,b)$, we have 
\begin{equation}\label{eq:intersection-pull-back}
	\pi^*D \cdot E = 0.
\end{equation}
This can be shown in the same way as the analogous statement for the classical blow-up. 

\section{Our family of normal surface singularities}\label{sec:surfaces}

In this section, we introduce the family of normal surface singularities of our interest that appear in the proof from \cite{MVV1} of the monodromy conjecture for a space monomial curve introduced in Section~\ref{sec:space-monomial}. We also introduce a good $\Q$-resolution, which we immediately use to show that these singularities for $g\geq 3$ are not Brieskorn-Pham and to show the conditions for their link to be a $\q$. In the next section, we will use the same resolution to identify the singularities in this family with a $\z$ link, and to show that these are of splice type. 

\subsection{Definition of our surface singularities}\label{sec:def-surfaces}

Consider a space monomial curve $Y \subset \C^{g+1}$ given by the equations~\eqref{eq:equations-Y} with $g \geq 2$. For $(\lambda_2,\ldots, \lambda_g) \in (\C\setminus \{0\})^{g-1}$, we define the affine scheme $S(\lambda_2,\ldots, \lambda_g)$ in $\C^{g+1}$ given by 
\begin{equation} \label{eq:equations-S}
		\renewcommand{\arraystretch}{1.1}{\left\{\begin{array}{c c l l l}
		f_1& + & \lambda_2f_2  &  = 0 \\
		f_2& + & \lambda_3f_3 &= 0  \\
		& \vdots & &\\
		f_{g-1} & +  & \lambda_gf_g & = 0.\\
    	\end{array}\right.}
\end{equation}
For \emph{generic} $(\lambda_2,\ldots, \lambda_g) \in (\C\setminus \{0\})^{g-1}$ (i.e., the point $(\lambda_2,\ldots, \lambda_g)$ is contained in the non-zero complement of a specific closed subset of $(\C\setminus \{0\})^{g-1}$), one can show that $S(\lambda_2,\ldots, \lambda_g)$ is a normal complete intersection surface which is smooth outside the origin, see~\cite[Prop. 4.2]{MVV1}. From now on, we will denote such a surface by $S := S(\lambda_2,\ldots, \lambda_g) \subset \C^{g+1}$, and we are interested in the link of these normal singularities $(S,0) \subset (\C^{g+1},0)$. 

\subsection{A good $\Q$-resolution of our surface singularities} 	\label{sec:Q-resolution} 

In \cite[Section 5]{MVV1}, the computation of $g$ weighted blow-ups $\varphi_k$ for $k = 1,\ldots, g$ yields an embedded $\Q$-resolution $\varphi = \varphi_1 \circ \cdots \circ \varphi_g : \hat{S} \rightarrow S$ of the space monomial curve $Y$ given by~\eqref{eq:equations-Y} seen as Cartier divisor on $S$. Because the surface $S$ is already $\Q$-resolved after the first $g-1$ blow-ups, and the last step is needed to desingularize the curve $Y$, we can consider the good $\Q$-resolution $\hat{\varphi} := \varphi_1 \circ \cdots \circ \varphi_{g-1}: \hat{S} \rightarrow S$ of $(S,0)$. We will now explain the properties of this resolution that are needed to see that $(S,0)$ is not Brieskorn-Pham for $g \geq 3$, and to prove the characterization for $(S,0)$ to have a $\q$ or $\z$ link from Theorem~\ref{thm:rat-int-hom}. For more details, we refer to~\cite[Section 5]{MVV1}. \\

First of all, for each blow-up $\varphi_k$ for $k = 1,\ldots, g-1$, we denote the exceptional divisor by $\E_k$. To ease the notation, we also denote their strict transform under later blow-ups by $\E_k$. Hence, in the end, the exceptional curves of the good $\Q$-resolution $\hat{\varphi}$ are the irreducible components of these $\E_k$. If we define \[r_k := \frac{e_k}{\lcm(n_{k+1},\ldots, n_g)}, \qquad k = 1,\ldots, g-1,\] then each $\E_k$ is the disjoint union of $r_k$ isomorphic irreducible components that we denote by $\E_{kj}$ for $j = 1,\ldots, r_k$. In particular, the last exceptional divisor $\E_{g-1}$ is always irreducible, and the pull-back of the Cartier divisor $Y$ under $\hat{\varphi}$ is given by 
\begin{equation}\label{eq:pull-back-Y}
 	\hat{\varphi}^{\ast}Y = \hat{Y} + \sum_{\smallmatrix 1 \leq k \leq g-1 \\ 1 \leq j \leq r_k \endsmallmatrix} N_k \E_{kj},
 \end{equation}
where $\hat{Y}$ is the strict transform of $Y$ under $\hat{\varphi}$, and $N_k$ for $k = 1,\ldots, g-1$ is the multiplicity of $\E_k$, which is equal to $\lcm ( \frac{\lbeta_k}{e_k}, n_k, \ldots, n_g )$. Furthermore, each divisor $\E_k$ for ${k = 2,\ldots, g-2}$ (if $g\geq 4$) only intersects $\E_{k-1}$ and $\E_{k+1}$, and $\E_{g-1}$ only intersects $\E_{g-2}$. For every $k = 1,\ldots, g-2$ (if $g \geq 3$), the intersections of $\E_k$ and $\E_{k+1}$ are \emph{equally distributed}, that is, each of the components $\E_{(k+1)j}$ of $\E_{k+1}$ intersects precisely $\frac{r_k}{r_{k+1}}$ components of $\E_k$, each component $\E_{kj}$ of $\E_k$ is intersected by only one of the components of $\E_{k+1}$, and each non-empty intersection between two components $\E_{kj}$ and $\E_{(k+1)j'}$ consists of a single point. In other words, the dual graph of the good $\Q$-resolution $\hat{\varphi}: \hat{S} \rightarrow S$ is a tree as in Figure~\ref{fig:dual-graph}. 
  
\begin{figure}[ht]
\includegraphics{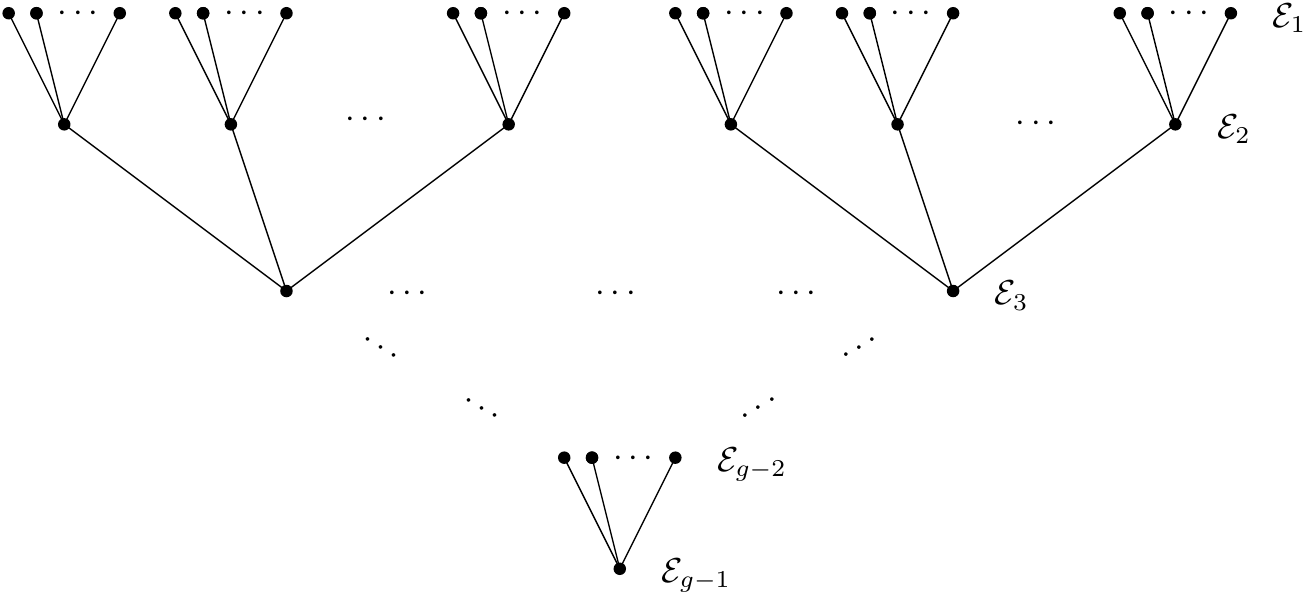}
\caption{Dual graph of the good $\Q$-resolution of $(S,0)$.}
\label{fig:dual-graph}
\end{figure}

It is important to note that $\hat{\varphi}$ is not a good resolution of $(S,0)$ as $\hat{S}$ still contains a lot of singularities that need to be resolved. To explain these singularities, we put $M_k := \lcm(\frac{\lbeta_k}{e_k}, n_{k+1},\ldots, n_g)$ for $k = 0,\ldots, g$, and we consider the divisors $H_i$ for $i = 0,\ldots, g$ on $S$ defined by $\{ x_i = 0 \} \cap S \subset \C^{g+1}$. Again, to ease the notation, we denote their strict transforms also by $H_i$ throughout the process. We further consider the curve $Y$ whose strict transform is always denoted by $\hat{Y}$. In the resolution of $(S,0)$, each $H_k$ for $k = 1,\ldots, g-1$ is separated from $\hat{Y}$ at the $k$th step and intersects the $k$th exceptional divisor $\E_k$ transversely at some singular point(s). More precisely, if we denote a point in the intersection $\E_k \cap H_k$ by $Q_k$, then there are $\frac{\lbeta_k}{M_k}$ such points which are equally distributed along the $r_k$ components of $\E_k$. Locally around each such point, we have the following situation at $[(x_0,x_k)]$:
\begin{equation*}
	\left\{\begin{aligned}
		& \hat{S} = X \bigg( \gcd \Big( e_{k-1}, \frac{n_k \bar{\beta}_k}{n_{k+1}}, \ldots,\frac{n_k \bar{\beta}_k}{n_g} \Big);-1,\lbeta_k\bigg) \\
		& \E_k: \ x_0^{n_k \bar{\beta}_k} = 0, \qquad H_k: \ x_k = 0.
	\end{aligned}\right.
\end{equation*}\[\]
Because $\gcd\big(\gcd ( e_{k-1}, \frac{n_k \bar{\beta}_k}{n_{k+1}}, \ldots,\frac{n_k \bar{\beta}_k}{n_g}),\lbeta_k\big) = \gcd \big( e_k, \frac{n_k \bar{\beta}_k}{n_{k+1}}, \ldots,\frac{n_k \bar{\beta}_k}{n_g} \big),$ these points are Hirzebruch-Jung singularities of type $\frac{1}{d_k}(1,q_k)$ with
\[d_k := \frac{\gcd \Big( e_{k-1}, \frac{n_k \bar{\beta}_k}{n_{k+1}}, \ldots,\frac{n_k \bar{\beta}_k}{n_g} \Big)}{\gcd\Big( e_k, \frac{n_k \bar{\beta}_k}{n_{k+1}}, \ldots,\frac{n_k \bar{\beta}_k}{n_g} \Big)} = \frac{\lcm\Big(\frac{n_k\lbeta_k}{e_k}, n_{k+1},\ldots, n_g\Big)}{\lcm\Big(\frac{n_k\lbeta_k}{e_{k-1}}, n_{k+1},\ldots, n_g\Big)} = \frac{N_k}{M_k}.\] Here, the second equality follows from the elementary fact that for $m_1,\ldots, m_r$ a set of non-zero integers and $m$ a common multiple, we have 
\begin{equation} \label{eq:rel-gcd-lcm}
	\gcd\Big(\frac{m}{m_1}, \ldots, \frac{m}{m_r} \Big) = \frac{m}{\lcm(m_1,\ldots, m_r)},
\end{equation}
and the third equality follows from the definition $n_k = \frac{e_{k-1}}{e_k}$ and the fact that $\gcd(\frac{\lbeta_k}{e_k},n_k) = 1$. For later purposes, we can rewrite $d_k$ as 
\begin{equation}\label{eq:d_k}
 	d_k = \frac{n_k\gcd\Big(\frac{\lbeta_k}{e_k},\lcm(n_{k+1},\ldots,n_g)\Big)}{\gcd\Big(\frac{n_k\lbeta_k}{e_k},\lcm(n_{k+1},\ldots, n_g)\Big)} = \frac{n_k}{\gcd\big(n_k,\lcm(n_{k+1},\ldots, n_g)\big)} = \frac{n_kr_k}{r_{k-1}},
\end{equation}
where we extend the sequence $r_1,\ldots, r_{g-1}$ with $r_0 := \frac{e_0}{\lcm(n_1,\ldots, n_g)} = \frac{\lbeta_0}{M_0}$. \\

Similarly, in the intersection $\E_1 \cap H_0$, there are $\frac{\lbeta_0}{M_0}$ points denoted by $Q_0$ around which we have the following local equation at $[(x_0,x_1)]$:
\begin{equation*}
	\left\{\begin{aligned}
		& \hat{S} = X \bigg( \gcd\Big(\frac{n_0\lbeta_0}{n_1},\frac{n_0\lbeta_0}{n_2},\ldots,\frac{n_0\lbeta_0}{n_g}\Big);\lbeta_0,-1 \bigg) \\
		& \E_1: \ x_1^{n_0\lbeta_0} = 0, \qquad H_0: \ x_0 = 0.
	\end{aligned}\right.
\end{equation*}
Each component of $\E_1$ contains the same number of such points, which are of type $\frac{1}{d_0}(q_0,1)$ with \[d_0 := \frac{\gcd\Big(\frac{n_0\lbeta_0}{n_1},\frac{n_0\lbeta_0}{n_2},\ldots,\frac{n_0\lbeta_0}{n_g}\Big)}{\gcd\Big(\lbeta_0,\frac{n_0\lbeta_0}{n_1},\ldots,\frac{n_0\lbeta_0}{n_g}\Big)} = \frac{N_1}{M_0},\] where we again used relation~\eqref{eq:rel-gcd-lcm} and the fact that $\frac{\lbeta_1}{e_1} = n_0$. \\

For $g \geq 3$, a next set of singular points of $\hat{S}$ are the points in an intersection $\E_k \cap \E_{k+1}$ for ${k = 1,\ldots, g-2}$ that we denote by $Q_{k(k+1)}$. We have already explained that there are $r_k$ such points in total, one on each component of $\E_k$, and that each component of $\E_{k+1}$ contains $\frac{r_k}{r_{k+1}}$ such points. Furthermore, the local situation around $Q_{k(k+1)}$ can be described in the variables $[(x_0,x_{k+1})]$ by:
\begin{equation}\label{eq:local-situation-Q_{k(k+1)}}
	\left\{\begin{aligned}
		& \hat{S} = X \left(\!\!\! \begin{array}{c|cc}
			\frac{n_{k+1} \bar{\beta}_{k+1} - n_k \bar{\beta}_k}{\lcm(n_{k+1},\ldots,n_g)} & 1 & -1 \\[0.2cm]
			(n_{k+1} \bar{\beta}_{k+1} - n_k \bar{\beta}_k) e_{k+1} & - \bar{\beta}_{k+1} & \frac{n_k\lbeta_k}{n_{k+1}}
			\end{array} \!\!\right) \\[5pt]
		& \E_k: \ x_0^{n_k \bar{\beta}_k} = 0, \qquad \E_{k+1}: \ x_{k+1}^{n_{k+1} \bar{\beta}_{k+1}} = 0.
	\end{aligned}\right.
\end{equation}
One can show that these are cyclic quotient singularities with 
\begin{equation}\label{eq:d_{k(k+1)}}
	d_{k(k+1)} := \frac{r_kN_kN_{k+1}(n_{k+1}\lbeta_{k+1} - n_k\lbeta_k)}{n_kn_{k+1}\lbeta_k\lbeta_{k+1}}
\end{equation}
the order of the underlying small group as follows. First, by multiplying conveniently, we can rewrite $\hat{S}$ into the form $X(\begin{smallmatrix} d \\ d \end{smallmatrix} \vert \begin{smallmatrix} a_1 & a_2 \\ a_3 & a_4 \end{smallmatrix})$, where $d = n_{k+1}\lbeta_{k+1} -  n_k\lbeta_k$. Second, the group automorphism $(\xi,\eta) \mapsto (\xi\eta^{-1},\eta)$ on $\mu_d \times \mu_d$ induces an isomorphism $X(\begin{smallmatrix} d \\ d \end{smallmatrix} \vert \begin{smallmatrix} a_1 & a_2 \\ a_3 & a_4 \end{smallmatrix})\simeq X(\begin{smallmatrix} d \\ d \end{smallmatrix} \vert \begin{smallmatrix} a_1 & a_2 \\ a_3 - a_1 & a_4 - a_2 \end{smallmatrix})$ given by the identity. Using such an automorphism repeatedly yields an isomorphism 
\begin{equation}\label{eq:upper-triangular}
	X\left(\begin{array}{c|cc} d & a_1 & a_2 \\ d & a_3 & a_4 \end{array}\right)\simeq X\left(\begin{array}{c|cc} d & \gcd(a_1,a_3) & \alpha a_2 + \beta a_4\\ d & 0 & \frac{a_1a_4 - a_2a_3}{\gcd(a_1,a_3)} \end{array}\right): [(x_1,x_2)] \mapsto [(x_1,x_2)],
\end{equation}
where $\alpha, \beta\in \Z$ such that $\gcd(a_1,a_3) = \alpha a_1 + \beta a_3$. Third, every quotient space of the form $X(\begin{smallmatrix} d \\ d \end{smallmatrix} \vert \begin{smallmatrix} a_1 & a_2 \\ 0 & a_4 \end{smallmatrix}) $ is isomorphic to a cyclic quotient space under the morphism \[X\left(\begin{array}{c|cc} d & a_1 & a_2 \\ d & 0 & a_4 \end{array}\right)\simeq X\Big(d; a_1, \frac{da_2}{\gcd(d,a_4)}\Big): [(x_1,x_2)] \mapsto [(x_1,x_2^{\frac{d}{\gcd(d,a_4)}})].\] Finally, we can rewrite the resulting cyclic singularity into a Hirzebruch-Jung singularity as explained in Section~\ref{sec:quotient-sing}. We do not provide more details as we will not need an explicit expression for $d_{k(k+1)}$ in general. It is, however, worth mentioning that this approach can be used to show that any quotient space $X({\bf d};A) = \C^2/\mu_{\bf d}$ is isomorphic to a cyclic quotient space, and that we will illustrate this approach when the link of $(S,0)$ is a $\z$, see Section~\ref{sec:our-surfaces-splice-type}. \\
 
The last singular point of $\hat{S}$ for $g\geq 2$ is the intersection point $P_{g-1} := \E_{g-1} \cap \hat{Y} = \E_{g-1} \cap H_g$ around which we have 
\begin{equation}\label{eq:local-situation-P_{g-1}}
	\left\{\begin{aligned}
		& \hat{S} = X\bigg(n_g;-1, \frac{n_{g-1} \bar{\beta}_{g-1}}{n_g}\bigg)\\
		& \E_{g-1}: \ x_0^{n_{g-1} \bar{\beta}_{g-1}} = 0, \qquad H_g : \ x_g = 0, \qquad \hat{Y}: \ x_g^{n_g} - x_0^{n_g\lbeta_g - n_{g-1}\lbeta_{g-1}} = 0,
	\end{aligned}\right.
\end{equation}
Clearly, this point is a Hirzebruch-Jung singularity of type $\frac{1}{d}(1,q)$ with \[d := \frac{n_g}{\gcd(n_{g-1},n_g)\gcd\Big(\frac{\lbeta_{g-1}}{n_g},n_g\Big)}.\]

To recapitulate, we visualize the good $\Q$-resolution $\hat{\varphi}$ as in Figure~\ref{fig:final-resolution}, which shows the exceptional curves and the singular points. For simplicity, the components of each $\E_k$ are represented by lines, but we will see in a moment that they are not rational in general. \\

\begin{figure}[ht]
\includegraphics{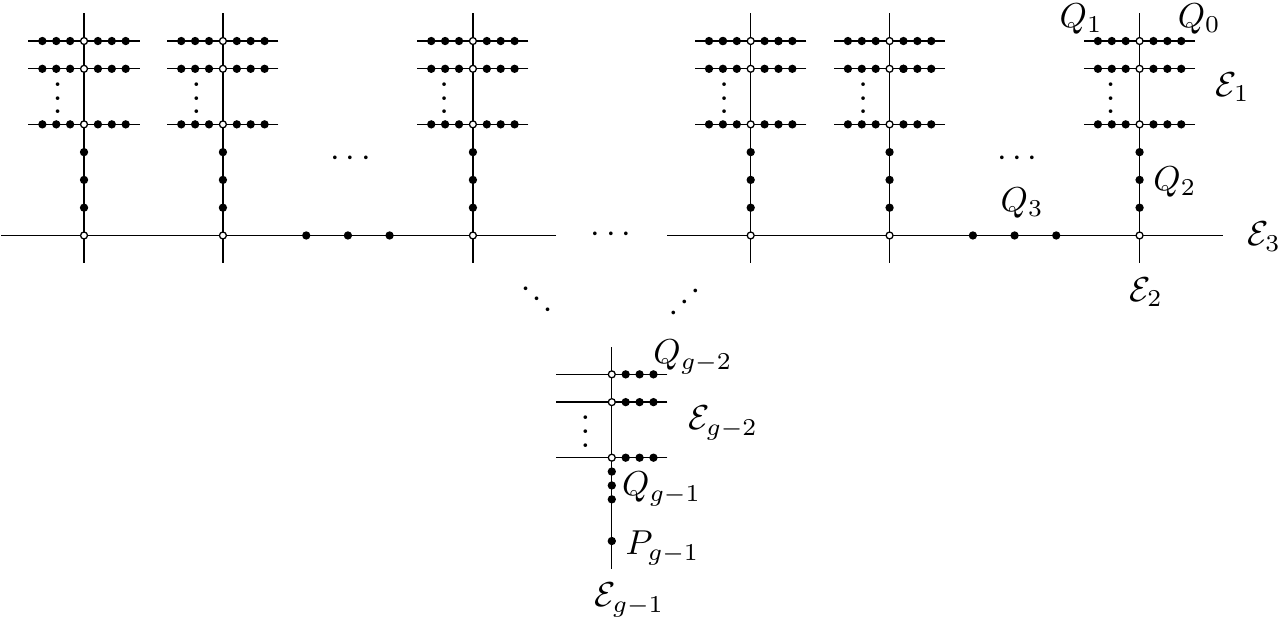}    
\caption{The good $\Q$-resolution of $(S,0)$.}
\label{fig:final-resolution}
\end{figure}

Using Corollary 6.5 from~\cite{MVV1}, we can compute the Euler characteristic of the exceptional curves of $\hat{\varphi}$. More precisely, this result gives an expression for the Euler characteristic of the exceptional divisor $\E_k$ for $k =1,\ldots, g-1$ without its singularities: it states that
\[\check{\E}_k := \left\{\begin{array}{ll}
		\E_1 \setminus ((\E_1 \cap H_0) \cup (\E_1\cap H_1) \cup (\E_1\cap \E_2)) & \text{for } k = 1 \\
		\E_k \setminus ((\E_k \cap H_k) \cup (\E_k\cap \E_{k-1}) \cup (\E_k \cap \E_{k+1})) & \text{for } k = 2,\ldots, g-2 \\
		\E_{g-1} \setminus ((\E_{g-1} \cap H_{g-1}) \cup (\E_{g-1} \cap \E_{g-2}) \cup (\E_{g-1} \cap \hat{Y})) & \text{for } k = g-1 \\
\end{array}\right.\]
has Euler characteristic $\chi(\check{\E}_k) = - \frac{n_k\lbeta_k}{N_k}.$ Hence, the Euler characteristic of $\E_k$ can be easily computed by adding the cardinality of all its singularities. This yields:
\begin{equation*}\renewcommand{\arraystretch}{1.5}
	\chi(\E_k) = \left\{\begin{array}{ll}
		- \frac{n_1\lbeta_1}{N_1} + \frac{\lbeta_0}{M_0} + \frac{\lbeta_1}{M_1} + r_1
		& \text{for } k = 1 \\
		-\frac{n_k\lbeta_k}{N_k} + \frac{\lbeta_k}{M_k} + r_{k-1} + r_k & \text{for } k = 2,\ldots, g-2\\
		- \frac{n_{g-1}\lbeta_{g-1}}{N_{g-1}} + \frac{\lbeta_{g-1}}{M_{g-1}} + r_{g-2} + 1  & \text{for } k = g-1. 
\end{array}\right.
\end{equation*}
Because the components $\E_{kj}$ for $j = 1,\ldots, r_k$ are disjoint and isomorphic, their Euler characteristic is equal to $\chi(\E_{kj}) = \frac{\chi(\E_k)}{r_k}$. Using that $\frac{N_k}{M_k} = \frac{n_kr_k}{r_{k-1}} = \frac{n_k}{\gcd(n_k,\lcm(n_{k+1},\ldots, n_g))}$ for $k = 1, \ldots, g-1$, see~\eqref{eq:d_k}, and that $\frac{\lbeta_k}{r_kM_k} = \gcd\big(\frac{\lbeta_k}{e_k},\lcm(n_{k+1},\ldots, n_g)\big)$, we can rewrite these Euler characteristics as \begin{equation*}
	\chi(\E_{kj}) =
		2 - \bigg(\gcd\big(n_k,\lcm(n_{k+1},\ldots, n_g)\big) - 1\bigg)\bigg(\gcd\Big(\frac{\lbeta_k}{e_k},\lcm(n_{k+1},\ldots, n_g)\Big) - 1\bigg).
\end{equation*}
We indeed see that the exceptional curves are not rational in general. Even more, this implies that the genus of $\E_{kj}$ is zero if and only if \[\gcd\big(n_k,\lcm(n_{k+1},\ldots, n_g)\big) = 1 \text{ or }\gcd\Big(\frac{\lbeta_k}{e_k},\lcm(n_{k+1},\ldots, n_g)\Big) =1 \text{ for } k = 1,\ldots, g-1.\] Since the dual graph of the good $\Q$-resolution $\hat{\varphi}:\hat{S} \rightarrow S$ is a tree and the quotient singularities of $\hat{S}$ can be resolved with bamboo-shaped dual graphs and rational exceptional curves, these are already the conditions under which $(S,0)$ has a $\q$ link. In other words, we have already shown the first part of Theorem~\ref{thm:rat-int-hom}.

\subsection{Our surface singularities versus Brieskorn-Pham surface singularities}

If $g = 2$, then $(S,0) \subset (\C^3,0)$ is a Brieskorn-Pham surface singularity given by the equation \[u(x_0,x_1)x_0^{n_0} + x_1^{n_1} + \lambda_2x_2^{n_2} = 0,\] where $u(x_0,x_1) = -1 - \lambda_2x_0^{b_{20}-n_0}x_1^{b_{21}} \in \C\{x_0,x_1\}$ is a unit as $b_{20} > n_0$, see~\cite[Lemma 3.2]{MVV2}. 
Hence, in this case, the link of $(S,0)$ is a $\q$ (resp. $\z$) under the condition of Proposition~\ref{prop:Brieskorn-Pham-rat-int}, which is equivalent to (recall that $\gcd(n_0,n_1) = 1$) the condition that $\gcd(n_0,n_2) = 1$ or $\gcd(n_1,n_2)=1$ (resp. that the exponents $n_i$ for $i  = 0,1,2$ are pairwise coprime).\\

If $g\geq 3$, then we claim that $(S,0) \subset (\C^{g+1},0)$ is never a Brieskorn-Pham singularity. To prove this, we will show that the minimal good resolution of $(S,0)$ contains at least $g-1$ \emph{rupture} exceptional curves. An irreducible exceptional curve is called \emph{rupture} if either its genus is positive, or its genus is zero and it has valency at least $3$ (i.e., it intersects at least three times other components of the exceptional locus). This implies that $(S,0)$ is indeed not Brieskorn-Pham for $g \geq 3$ as a Brieskorn-Pham surface singularity has at most one rupture exceptional curve in its minimal good resolution. The latter can be seen by considering a good $\Q$-resolution of a Brieskorn-Pham surface singularity consisting of one weighted blow-up at the origin which yields one irreducible exceptional curve $\E$ containing three sets of Hirzebruch-Jung singularities. We refer for more details to~\cite[Example 3.6]{Ma1}; see also Remark~\ref{rem:Brieskorn-Pham}. As each of these singularities can be minimally resolved with a bamboo-shaped dual graph and rational exceptional curves, the only possible rupture exceptional curve in the obtained good resolution is the strict transform of $\E$. This implies that the minimal good resolution of a Brieskorn-Pham singularity indeed contains at most one rupture exceptional curve. Even more, the minimal good resolution of a Brieskorn-Pham surface singularity has no rupture exceptional curve if and only if it has only rational exceptional curves and a bamboo-shaped dual graph or, thus, if and only if the singularity is a cyclic quotient singularity. \\

To show that the minimal good resolution of $(S,0)$ has at least $g-1$ rupture exceptional curves, we make use of the good $\Q$-resolution $\hat{\varphi}: \hat{S} \rightarrow S$ of $(S,0)$, from which we can obtain a (not necessarily minimal) good resolution $\pi: \tilde S \rightarrow S$ of $(S,0)$ by minimally resolving the singularities of $\hat{S}$. Since these singularities are all Hirzebruch-Jung, the only possible rupture exceptional curves of $\pi$ are the strict transforms of the exceptional curves of the good $\Q$-resolution. The next result immediately implies that the good resolution $\pi$ has at least $g-1$ exceptional divisors that are rupture.

\begin{prop}\label{prop:rupture-divisors}
Let $(S,0) \subset (\C^{g+1},0)$ be a normal surface singularity defined by the equations~\eqref{eq:equations-S} with $g \geq 3$. Consider the good $\Q$-resolution $\hat{\varphi}:\hat{S} \rightarrow S$ of $(S,0)$ introduced in Section~\ref{sec:Q-resolution}. Then,
\begin{enumerate}[wide,labelindent = 0pt]
	\item[(i)] each exceptional curve $\E_{kj}$ for $k = 1,\ldots, g-2$ and $j = 1,\ldots, r_k$ yields a rupture exceptional curve in the good resolution $\pi:\tilde S \rightarrow S$ of $(S,0)$ coming from $\hat{\varphi}$; and
	\item[(ii)] if $r_{g-2} = 1$ (i.e., the exceptional divisor $\E_{g-2}$ is irreducible), then $\E_{g-1}$ yields a rupture exceptional curve in the good resolution $\pi:\tilde S \rightarrow S$ of $(S,0)$ coming from $\hat{\varphi}$.
\end{enumerate} \end{prop}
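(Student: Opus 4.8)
The plan is to combine the observation, recalled just above, that a good resolution $\pi : \tilde S \to S$ of $(S,0)$ is obtained from $\hat\varphi : \hat S \to S$ by minimally resolving the Hirzebruch--Jung singularities of $\hat S$, with the definition that an irreducible exceptional curve of $\pi$ is rupture if and only if it has positive genus, or has genus zero and valency at least $3$. Resolving points does not alter the genus of the strict transform of a curve, so the strict transform of a component $\E_{kj}$ has the same genus as $\E_{kj}$; by the Euler characteristic formula of Section~\ref{sec:Q-resolution} this genus is positive unless $\gcd(n_k,\lcm(n_{k+1},\dots,n_g)) = 1$ or $\gcd(\tfrac{\lbeta_k}{e_k},\lcm(n_{k+1},\dots,n_g)) = 1$. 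Hence, when $\E_{kj}$ has positive genus there is nothing to prove, and the real task is to show that whenever one of these two gcd's equals $1$ the strict transform of $\E_{kj}$ in $\pi$ has valency at least $3$ (and similarly for $\E_{g-1}$ under the hypothesis $r_{g-2}=1$).

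I would count the valency as follows. The exceptional curves of $\pi$ meeting the strict transform of $\E_{kj}$ are: first, the strict transforms of the neighbouring exceptional components of $\E_{kj}$ in $\hat S$ --- for $k\ge 2$, the $\tfrac{r_{k-1}}{r_k}$ components of $\E_{k-1}$ and the single component of $\E_{k+1}$ meeting $\E_{kj}$, as described in Section~\ref{sec:Q-resolution} --- each contributing $1$ to the valency whether the intersection point was smooth or a Hirzebruch--Jung singularity (in the latter case via the inserted bamboo); and second, the bamboos resolving the Hirzebruch--Jung singularities lying on $\E_{kj}$ but not on a neighbouring exceptional component, namely the $\tfrac{\lbeta_k}{r_kM_k}$ points $Q_k \in \E_k\cap H_k$ (and, if $k=1$, the $\tfrac{r_0}{r_1}$ points $Q_0\in \E_1\cap H_0$), each of which contributes $1$ exactly when it is a nontrivial Hirzebruch--Jung singularity --- here we use that $H_k$, $H_0$ and $\hat Y$ are strict transforms of curves on $S$ and so are never exceptional in $\pi$. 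By~\eqref{eq:d_k}, $\tfrac{r_{k-1}}{r_k} = \gcd(n_k,\lcm(n_{k+1},\dots,n_g))$ and $d_k = n_k/\gcd(n_k,\lcm(n_{k+1},\dots,n_g))$, and $\tfrac{\lbeta_k}{r_kM_k} = \gcd(\tfrac{\lbeta_k}{e_k},\lcm(n_{k+1},\dots,n_g))$ as in Section~\ref{sec:Q-resolution}. So for $2\le k\le g-2$ the valency is at least $1 + \gcd(n_k,\lcm(n_{k+1},\dots,n_g))$; this is $\ge 3$ if the gcd is $\ge 2$, while if the gcd is $1$ then $d_k = n_k\ge 2$, the points $Q_k$ are nontrivial, and they add at least $\gcd(\tfrac{\lbeta_k}{e_k},\lcm(n_{k+1},\dots,n_g))\ge 1$, so the valency is again $\ge 3$. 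Since genus zero forces one of the two gcd's to be $1$, part~(i) follows for $k\ge 2$.

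The case $k=1$ of part~(i) takes more care, since the ``downward'' neighbour of $\E_{1j}$ is the non-exceptional divisor $H_0$. Using $\tfrac{\lbeta_1}{e_1} = n_0$, the valency equals $1 + \varepsilon_0\,\gcd(n_1,\lcm(n_2,\dots,n_g)) + \varepsilon_1\,\gcd(n_0,\lcm(n_2,\dots,n_g))$, where $\varepsilon_0,\varepsilon_1\in\{0,1\}$ record whether $Q_0$, respectively $Q_1$, are nontrivial, i.e.\ whether $d_0>1$, respectively $d_1>1$. From~\eqref{eq:d_k} one gets $d_1 = n_1/\gcd(n_1,\lcm(n_2,\dots,n_g))$, and a short computation gives $d_0 = N_1/M_0 = n_0/\gcd(n_0,\lcm(n_2,\dots,n_g))$, where one uses $M_0 = \lcm(n_1,\dots,n_g)$ together with the identity $\gcd(n_0,\lcm(n_1,\dots,n_g)) = \gcd(n_0,\lcm(n_2,\dots,n_g))$, valid because $\gcd(n_0,n_1)=1$ by property~(iii). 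Setting $A := \gcd(n_1,\lcm(n_2,\dots,n_g))$ and $B := \gcd(n_0,\lcm(n_2,\dots,n_g))$, genus zero means $A=1$ or $B=1$; since $n_0,n_1\ge 2$ we have $\varepsilon_1=1$ when $A=1$ and $\varepsilon_0=1$ when $B=1$. If $A=1$ the valency is $1+\varepsilon_0+B$, which is $\ge 3$ (already if $B\ge 2$, and it equals $1+1+1$ if $B=1$); if $A\ge 2$ and $B=1$ the valency is $1+A+\varepsilon_1\ge 3$. This settles part~(i).

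For part~(ii), the key point is $r_{g-2} = e_{g-2}/\lcm(n_{g-1},n_g) = n_{g-1}n_g/\lcm(n_{g-1},n_g) = \gcd(n_{g-1},n_g)$, so the hypothesis $r_{g-2}=1$ is exactly $\gcd(n_{g-1},n_g)=1$; then $\E_{g-1}$, which is always irreducible, has genus zero by the Euler characteristic formula. Its strict transform in $\pi$ meets $\E_{g-2}$ at $r_{g-2}=1$ point; the $\gcd(\tfrac{\lbeta_{g-1}}{n_g},n_g)$ points $Q_{g-1}\in\E_{g-1}\cap H_{g-1}$ are nontrivial since $d_{g-1} = n_{g-1}/\gcd(n_{g-1},n_g) = n_{g-1}\ge 2$; and $P_{g-1}=\E_{g-1}\cap\hat Y$ contributes $1$ when its Hirzebruch--Jung order $d = n_g/\bigl(\gcd(n_{g-1},n_g)\gcd(\tfrac{\lbeta_{g-1}}{n_g},n_g)\bigr) = n_g/\gcd(\tfrac{\lbeta_{g-1}}{n_g},n_g)$ exceeds $1$. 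If $\gcd(\tfrac{\lbeta_{g-1}}{n_g},n_g)\ge 2$ the valency is already $\ge 3$; if it equals $1$ then $d = n_g\ge 2$ and the valency is $1+1+1=3$. The main obstacle throughout is the bookkeeping: one must identify precisely which Hirzebruch--Jung points of $\hat S$ produce new exceptional curves in $\pi$ (exactly those of order $>1$) and the multiplicities of the various intersections, all read off from the local models of Section~\ref{sec:Q-resolution}; once that is done the remaining inequalities are elementary.
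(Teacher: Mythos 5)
Your proposal is correct and follows essentially the same route as the paper: both work on the good $\Q$-resolution $\hat{\varphi}$, count the valency of (the strict transform of) each $\E_{kj}$ by treating a Hirzebruch--Jung point as an intersection exactly when its order exceeds $1$, and then run the case analysis on $\gcd(n_k,\lcm(n_{k+1},\ldots,n_g))$ and $\gcd(\tfrac{\lbeta_k}{e_k},\lcm(n_{k+1},\ldots,n_g))$, using that $d_k=n_k$ (resp.\ $d_0=n_0$) when the relevant gcd is $1$. Your $\varepsilon_0,\varepsilon_1$ and $A,B$ bookkeeping for $k=1$ is just a compact rewriting of the paper's three explicit cases, and the arguments for $2\le k\le g-2$ and for $\E_{g-1}$ under $r_{g-2}=1$ coincide with the paper's.
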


\begin{proof}
Note that we can determine whether the strict transform of an exceptional curve of $\hat{\varphi}$ is rupture on $\tilde S$ by considering the original exceptional curve on $\hat{S}$ and counting each singularity as an intersection. However, we need to take into account that, under certain conditions, it is possible that some of the quotient singularities are in fact smooth. In this case, the latter points can not be counted as an intersection in the good resolution. \\

Let us first consider a component $\E_{1j}$ for some $j \in \{1,\ldots ,r_1\}$. If its genus is positive, then it will trivially induce a rupture exceptional curve. So suppose that its genus is zero, that is, $\gcd(n_1,\lcm(n_2,\ldots, n_g)) = 1$ or $\gcd(\frac{\lbeta_1}{e_1},\lcm(n_2,\ldots, n_g)) =1$. Since $\E_{1j}$ intersects $\E_2$ in a single point, we need to show that it contains at least two actual singular points outside $\E_2$. Recall that $\E_{1j}$ contains $\frac{\lbeta_0}{r_1M_0} = \gcd(n_1,\lcm(n_2,\ldots, n_g))$ points $Q_0$ whose order as Hirzebruch-Jung singularity is $d_0 = \frac{N_1}{M_0}$, and $\frac{\lbeta_1}{r_1M_1} = \gcd(\frac{\lbeta_1}{e_1},\lcm(n_2,\ldots, n_g))$ points $Q_1$ with order $d_1 = \frac{N_1}{M_1} = \frac{n_1}{\gcd(n_1,\lcm(n_2,\ldots n_g))}$. If $\gcd(n_1,\lcm(n_2,\ldots, n_g)) = 1$, then $d_1 = n_1$; if $\gcd(\frac{\lbeta_1}{e_1},\lcm(n_2,\ldots, n_g)) = 1$, then $d_0 = \frac{\lbeta_1}{e_1} = n_0$. Hence, we can distinguish three cases:
\begin{enumerate}[wide,labelindent = 0pt]
	\item[(i)] if $\gcd(n_1,\lcm(n_2,\ldots, n_g)) = 1$ and $\gcd(\frac{\lbeta_1}{e_1},\lcm(n_2,\ldots, n_g)) \geq 2$, then $\E_{1j}$ contains at least two singular points $Q_1$ with order $d_1 = n_1 > 1$;
	\item[(ii)] if  $\gcd(n_1,\lcm(n_2,\ldots, n_g))\geq 2$ and $\gcd(\frac{\lbeta_1}{e_1},\lcm(n_2,\ldots, n_g)) = 1$, then $\E_{1j}$ contains at least two singular points $Q_0$ with order $d_0 = n_0 > 1$;
	\item[(iii)] if $\gcd(n_1,\lcm(n_2,\ldots, n_g)) = \gcd(\frac{\lbeta_1}{e_1},\lcm(n_2,\ldots, n_g)) =1$, then $\E_{1j}$ contains one singular point $Q_0$ with order $d_0 = n_0 > 1$ and one singular point $Q_1$ with order $d_1 = n_1 > 1.$
\end{enumerate}
In other words, $\E_{1j}$ will indeed always yield a rupture exceptional curve. \\

For $\E_{kj}$ with $k \in \{2,\ldots, g-2\}$ (if $g \geq 4$) and $j \in \{1,\ldots, r_k\}$, we can work in a similar way. Assume again that its genus is zero, which is now the case if and only if $\gcd(n_k,\lcm(n_{k+1},\ldots, n_g)) = 1$ or $ \gcd(\frac{\lbeta_k}{e_k},\lcm(n_{k+1},\ldots, n_g)) = 1$. We know that $\E_{kj}$ has $\frac{r_{k-1}}{r_k} = \gcd(n_k,\lcm(n_{k+1},\ldots, n_g))$ intersection points with $\E_{k-1}$, a single intersection point with $\E_{k+1}$ and $\frac{\lbeta_k}{r_kM_k} = \gcd(\frac{\lbeta_k}{e_k},\lcm(n_{k+1},\ldots, n_g))$ points $Q_k$ whose order as Hirzebruch-Jung singularity is $d_k = \frac{N_k}{M_k} = \frac{n_k}{\gcd(n_k,\lcm(n_{k+1},\ldots, n_g))}$. Hence, if $\gcd(n_k,\lcm(n_{k+1},\ldots, n_g)) \geq 2$ (and $\gcd(\frac{\lbeta_k}{e_k},\lcm(n_{k+1},\ldots, n_g)) = 1$), then $\E_k$ has at least three intersections with other exceptional curves of $\hat{\varphi}$, and we are done. If $\gcd(n_k,\lcm(n_{k+1},\ldots, n_g)) = 1$, then $d_k = n_k > 1$. Therefore, in this case, $\E_{kj}$ will also be rupture as it intersects both $\E_{k-1}$ and $\E_{k+1}$ in a single point and contains at least one singular point $Q_k$ with order $d_k > 1$. \\

It remains to show the second part. If $r_{g-2} = 1$, then $\gcd(n_{g-1},n_g) = 1$, which implies that $\E_{g-1}$ has zero genus. Furthermore, it has one intersection point with $\E_{g-2}$, one point $P_{g-1}$ with order \[d = \frac{n_g}{\gcd\Big(\frac{\lbeta_{g-1}}{n_g},n_g\Big)},\] and $\frac{\lbeta_{g-1}}{M_{g-1}} = \gcd(\frac{\lbeta_{g-1}}{n_g},n_g)$ points $Q_{g-1}$ with order $d_{g-1} = n_{g-1} > 1$. We can again conclude: if $\gcd(\frac{\lbeta_{g-1}}{n_g},n_g) \geq 2$, then $\E_{g-1}$ contains at least two singular points $Q_{g-1}$ with order $d_{g-1} > 1$; if $\gcd(\frac{\lbeta_{g-1}}{n_g},n_g) = 1$, then $\E_{g-1}$ contains exactly two singular points, namely one $Q_{g-1}$ with order $d_{g-1} > 1$, and $P_{g-1}$ with order $d = n_g > 1$.  
\end{proof}

We still need to show that the minimal good resolution of $(S,0)$ contains at least $g-1$ rupture exceptional curves. From Proposition~\ref{prop:rupture-divisors}, it follows that each exceptional curve $\E_{kj}$ for $k = 1,\ldots, g-2$ and $j = 1,\ldots, r_k$ can not be contracted in the good resolution $\pi:\tilde S \rightarrow S$; either its genus is positive so that Castelnuovo's Contractibility Theorem does not apply, or it has at least three intersections with other exceptional curves so that the exceptional locus would not be a simple normal crossing divisor after contracting $\E_{kj}$. The same applies to $\E_{g-1}$ if $r_{g-2} = 1$ or $r_{g-2} \geq 3$. In other words, in these cases, the good resolution $\pi$ is minimal. If $r_{g-2} = 2$, it is possible that $\E_{g-1}$ is superfluous as the next example shows. However, the obtained minimal good resolution of $(S,0)$ coming from contracting $\E_{g-1}$ (and possibly executing subsequent contractions) will still have at least $g-1$ rupture exceptional curves: all the exceptional curves $\E_{kj}$ for $k=1,\ldots, g-2$ and $j = 1,\ldots, r_k$  are rupture, where $r_k \geq 1$ for $k = 1,\ldots, g-3$ (if $g \geq 4$) and $r_{g-2} = 2$.

\begin{ex}\label{ex:E_{g-1}-superfluous}
If $r_{g-2} = 2$, then it is possible that the good resolution $\pi:\tilde S \rightarrow S$ is not minimal. For example, consider the surface $S \subset \C^4$ defined by 
\begin{equation}\label{eq:example}
\renewcommand{\arraystretch}{1.2}{\left\{\begin{array}{l c l l l}
	x_1^2-x_0^3 & + & x_2^2 - x_0^5x_1 &  = 0 \\
	x_2^2 - x_0^5x_1 & + & x_3^2 - x_0^{10}x_2 &= 0.
    \end{array}\right.}	
\end{equation}
The semigroup of the corresponding space monomial curve $Y \subset \C^4$ is minimally generated by $(8,12,26,53)$. From the properties of the good $\Q$-resolution $\hat{\varphi}$ explained above, one can easily check the following:
\begin{enumerate}
	\item[(i)] the first exceptional divisor $\E_1$ has $r_1 = 2$ components $\E_{11}$ and $\E_{12}$ that each contain two singular points $Q_0$ of type $\frac{1}{3}(1,1)$, while every point $Q_1$ is smooth; 
	\item[(ii)] the genus of $\E_2$ is zero, and the points $P_2$ and $Q_2$ are smooth; and
	\item[(iii)] the intersection of $\E_1$ and $\E_2$ consists of two singular points $Q_{12}$, one on each component of $\E_1$, that are Hirzebruch-Jung of type $\frac{1}{7}(1,3)$.
\end{enumerate}
It follows that the dual graph of $\pi:\tilde S \rightarrow S$ is as in Figure~\ref{fig:dual-graph-example}, where we denote the strict transforms of $\E_{1j}$ and $\E_2$ still by $\E_{1j}$ and $\E_2$, respectively, and where the exceptional curves $\E_j^0$ and $\E^{12}_j$ come from resolving the singularities $Q_0$ and $Q_{12}$, respectively. Furthermore, one can show that the pull-back of $Y$ is given \[\pi^{\ast}Y = \hat{Y} + 6\sum_{j = 1}^2\E_{1j} + 26 \E_2 + 2\sum_{j=1}^4\E_j^0 + 8\sum_{j=1}^2\E^{12}_j + 10\sum_{j=3}^4\E^{12}_j + 12\sum_{j=5}^6\E^{12}_j,\] where $\hat{Y}$ is the strict transform of $Y$. Because $\pi^{\ast}Y \cdot \E_2 = 0$ by~\eqref{eq:intersection-pull-back} and $\hat{Y} \cdot \E_2 = 2$, which can be seen from the local equation~\eqref{eq:local-situation-P_{g-1}}, we find that the self-intersection number of $\E_2$ is $-1$. Hence, by Castelnuovo's Contractibility Theorem, the exceptional curve $\E_2$ can be contracted in order to find the minimal good resolution of $(S,0)$. However, this minimal good resolution has still $g-1 = 2$ rupture exceptional curves, namely $\E_{11}$ and $\E_{12}$.

\begin{figure}[ht]
\includegraphics{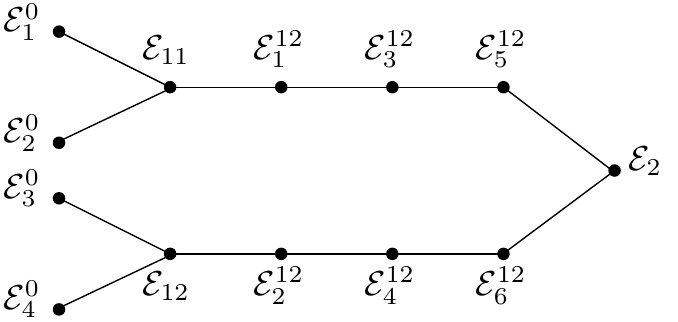}    
\caption{Dual graph of the good resolution of $(S,0) \subset (\C^4,0)$ defined by \eqref{eq:example}.}
\label{fig:dual-graph-example}
\end{figure}

\end{ex}

\section{Conditions for integral homology sphere link}\label{sec:proof}

In this section, we will prove the second part of Theorem~\ref{thm:rat-int-hom} for $g \geq 3$ using the good $\Q$-resolution $\hat{\varphi}:\hat{S} \rightarrow S$ of $(S,0)$ introduced in Section~\ref{sec:Q-resolution}. To this end, following Theorem~\ref{thm:conditions-rat-int}, we will investigate the determinant of $(S,0)$ with formula~\eqref{eq:det-bir-morphism} in terms of $\hat{\varphi}$.

\begin{remark} \label{rem:g=2}
\begin{enumerate}[wide,labelindent=0pt]
	\item[(i)] Note that Theorem~\ref{thm:rat-int-hom} generalizes the $g=2$ case or, thus, the classification for Brieskorn-Pham surface singularities in Proposition~\ref{prop:Brieskorn-Pham-rat-int}. Even more, for $g = 2$, one could also obtain this result by using the good $\Q$-resolution $\hat{\varphi} := \varphi_0: \hat{S} \rightarrow S$ of $(S,0)$.
	\item[(ii)] When the link of $(S,0)$ is a $\z$, we see that $r_k = 1$ and $N_k = n_k\lbeta_k$ for every $k = 1,\ldots, g-1$. Hence, all exceptional divisors $\E_k$ for $k = 1,\ldots, g-1$ are irreducible with multiplicity $n_k\lbeta_k$, and the dual graph of the good $\Q$-resolution $\hat{\varphi}: \hat{S} \rightarrow S$ is bamboo-shaped with quotient singularities as described in Section~\ref{sec:Q-resolution}. In particular, by Proposition~\ref{prop:rupture-divisors}, the good resolution of $(S,0)$ obtained from $\hat{\varphi}$ by resolving the singularities of $\hat{S}$ is minimal. 
\end{enumerate}
\end{remark}

\subsection{The determinant of the intersection matrix of the good $\Q$-resolution $\hat{\varphi}$} \label{sec:determinant}

Because we already know the singularities of $\hat{S}$, we will be able to compute the determinant of $(S,0)$ once we know the determinant of the intersection matrix $A$ of $\hat{\varphi}$. To compute the latter, we first need to calculate the (self-)intersection numbers of the exceptional curves $\E_{kj}$ for $k = 1,\ldots, g-1$ and $j = 1,\ldots,r_k$. Clearly, from the local situation~\eqref{eq:local-situation-Q_{k(k+1)}} around $Q_{k(k+1)}$ for every $k = 1,\ldots,g-2$, we immediately have $\E_{kj} \cdot \E_{(k+1)j'} = \frac{1}{d_{k(k+1)}}$ if $\E_{kj} \cap \E_{(k+1)j'} \neq \emptyset$. To find the self-intersection numbers $-a_k := \E_{kj}^2$, we can use the fact that $\hat{\varphi}^{\ast} Y \cdot \E_{kj} = 0$, see~\eqref{eq:intersection-pull-back}, where $\hat{\varphi}^{\ast}Y$ is given by~\eqref{eq:pull-back-Y}. Since $\hat{Y}$ only intersects $\E_{g-1}$ in the single point $P_{g-1}$ with local situation~\eqref{eq:local-situation-P_{g-1}}, we know that $\hat{Y} \cdot \E_{g-1} = \frac{n_g}{d}$ and $\hat{Y} \cdot \E_{kj} = 0$ for $k = 1,\ldots, g-2$ and $j = 1,\ldots, r_k$. We obtain
 \begin{equation}\renewcommand{\arraystretch}{1.6}\label{eq:self-intersection-numbers}
	a_k = \left\{\begin{array}{ll}
		\frac{N_2}{d_{12}N_1}  &\text{for } k = 1 \\
		\frac{1}{N_k}\Big(\frac{r_{k-1}N_{k-1}}{r_kd_{(k-1)k}} + \frac{N_{k+1}}{d_{k(k+1)}}\Big) & \text{for } k = 2,\ldots, g-2\\
		\frac{1}{N_{g-1}}\Big(\frac{r_{g-2}N_{g-2}}{d_{(g-2)(g-1)}} + \frac{n_g}{d}\Big) & \text{for } k = g-1.\\
\end{array}\right.
\end{equation} 
We can now write the intersection matrix $A$ as follows:
\begin{equation} \label{eq:A}
 A = \left(\begin{matrix} 
 A_1 & A_{1,2} & 0 & 0 &\cdots & 0  \\
 A_{2,1} & A_2 & A_{2,3} & 0 & \cdots & 0\\
 0 & A_{3,2} & A_3 & A_{3,4}  & \cdots & 0 \\
 \vdots & \vdots & \ddots & \ddots & \ddots & \vdots  \\
 0 & 0 & \cdots & A_{g-2,g-3} & A_{g-2} & A_{g-2,g-1} \\
 0 & 0 & \cdots & 0 &  A_{g-1,g-2} & A_{g-1}
 \end{matrix}\right).
\end{equation}
Here, we denote by $A_k$ for $k = 1,\ldots, g-1$ the $(r_k \times r_k)$-diagonal matrix with $-a_k$ on the diagonal, by $A_{k,k+1}$ for $k = 1,\ldots, g-2$ the $(r_k \times r_{k+1})$-matrix
\begin{equation}\label{eq:def-A_k,k+1}
	A_{k,k+1} = \left(
\begin{matrix}
 D_{k,k+1} & 0 & \cdots & 0 \\
 0 & D_{k,k+1}  &  \cdots & 0 \\
 \vdots &  \vdots & \ddots & \vdots  \\ 
  0 & 0  &  \cdots & D_{k,k+1}\\
  \end{matrix}\right),
\end{equation}
where $D_{k,k+1}$ is the $\frac{r_k}{r_{k+1}}$-column vector $(\frac{1}{d_{k(k+1)}},\ldots, \frac{1}{d_{k(k+1)}})^t$, and by $A_{k+1,k}  = A_{k,k+1}^t$ for $k = 1,\ldots, g-2$ the transpose of $A_{k,k+1}$. Note that $A_{g-1} = -a_g$ and $A_{g-2,g-1} = D_{g-2,g-1}$. \\

We will now show a formula for the determinant $\det(A)$ of a general matrix $A$ defined as in~\eqref{eq:A}. Hence, this formula can be used to compute the determinant of the intersection matrix for any good $\Q$-resolution with a dual graph as in Figure~\ref{fig:dual-graph}, in which the horizontally aligned exceptional curves are isomorphic, have the same self-intersection number, and have the same intersection behavior with the other exceptional curves. \\

We start by fixing some notation. First, for $k = 1,\ldots, g-2$, put $p_k := \frac{r_k}{r_{k+1}}$. Second, for $l = 2,\ldots, g-1$, let $s(l)$ be the set of non-empty subsets $K$ of ${\{(k,k+1) \mid k = 1,\ldots, l-1\}}$ such that for all $(k,k+1) \neq (k',k'+1) \in K$, we have $k \neq k'+1$ and $k' \neq k+1$. For such a set $K \in s(l)$, we call $c(K) :=\left\{ k \in \{1, \ldots, l\} \mid (k,k+1) \notin K, (k-1,k) \notin K\right\}$ its \emph{complement}. Finally, we introduce $R_0 := 1$, $R_1 := a_1$, and, for $l = 2,\ldots, g-1$, \[R_l := \prod_{k=1}^l a_k + \sum_{i=1}^{[\frac{l}{2}]} (-1)^i\sum_{K \in s(l), \vert K \vert = i} \bigg(\prod_{(k,k+1) \in K}\frac{p_k}{d_{k(k+1)}^2}\bigg)\bigg(\prod_{k \in c(K)} a_k \bigg),\] where we put $\prod_{k \in c(K)} a_k = 1$ if $c(K) = \emptyset$. For example, if $l = 2$, then $s(l)$ only contains the set $\{(1,2)\}$ with $c(\{(1,2)\}) = \emptyset$, so $R_2 = a_1a_2 - \frac{p_1}{d_{12}^2}.$ If $l = 3$, then $s(3)$ consists of two sets, $\{(1,2)\}$ and $\{(2,3)\}$, with complements $\{3\}$ and $\{1\}$, respectively. Hence, $R_3 = a_1a_2a_3 - \frac{p_1a_3}{d_{12}^2} - \frac{p_2a_1}{d_{23}^2}.$ \\

Before explaining how $\det(A)$ can be expressed in terms of these $R_l$ for $l = 1,\ldots, g-1$, we prove the following recurrence relation. 

\begin{lemma}\label{lem:recurrence-Rl}
	For all $l = 1,\ldots, g-2$, we have \[-R_{l+1} = -a_{l+1}R_l + \frac{p_lR_{l-1}}{d_{l(l+1)}^2}.\]
\end{lemma}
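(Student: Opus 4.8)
The plan is to prove the recurrence directly from the explicit formula defining $R_{l+1}$ by splitting the index set $s(l+1)$ according to whether or not a subset $K$ contains the pair $(l,l+1)$. Concretely, every $K \in s(l+1)$ either satisfies $(l,l+1) \notin K$, in which case $K$ is a (possibly empty) element of $s(l)$ together with nothing new, or $(l,l+1) \in K$, in which case — by the separation condition defining $s(l+1)$ — $K$ cannot contain $(l-1,l)$, so $K \setminus \{(l,l+1)\}$ is a (possibly empty) subset belonging to $s(l-1)$. This bijective bookkeeping is the crux of the argument.

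First I would write out $R_{l+1} = \prod_{k=1}^{l+1} a_k + \sum_{i\geq 1}(-1)^i \sum_{K \in s(l+1), |K|=i} (\cdots)$ and separate the inner sum into the two classes above. The terms with $(l,l+1)\notin K$ reassemble, after factoring out $a_{l+1}$, into exactly $a_{l+1}R_l$: indeed if $(l,l+1)\notin K$ then $l+1 \in c(K)$ (since also $(l,l+1)$ is the only pair that could put $l+1$ outside the complement), so the factor $a_{l+1}$ appears in $\prod_{k\in c(K)}a_k$, and what remains is precisely the $s(l)$-sum defining $R_l$, including the leading product $\prod_{k=1}^l a_k$. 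The terms with $(l,l+1)\in K$ contribute a factor $-\,p_l/d_{l(l+1)}^2$ (the sign coming from the extra element raising $|K|$ by one), and after removing $(l,l+1)$ from $K$ one is left with a subset $K' \in s(l-1)$ whose complement inside $\{1,\dots,l-1\}$ is $c(K)$ restricted appropriately — so this class sums to $-\frac{p_l}{d_{l(l+1)}^2}R_{l-1}$. Combining, $R_{l+1} = a_{l+1}R_l - \frac{p_l}{d_{l(l+1)}^2}R_{l-1}$, which is the claimed identity after multiplying by $-1$.

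I would handle the low cases $l=1$ and $l=2$ by hand against the definitions $R_0=1$, $R_1=a_1$, $R_2 = a_1a_2 - p_1/d_{12}^2$, $R_3 = a_1a_2a_3 - p_1a_3/d_{12}^2 - p_2a_1/d_{23}^2$ to make sure the edge conventions (empty $K$, empty $c(K)$, the upper limit $[\frac{l}{2}]$ on $i$) are consistent with the general splitting; these are quick consistency checks rather than real content. One should also verify that the upper summation bound behaves correctly: adding $(l,l+1)$ to a $K'\in s(l-1)$ with $|K'| = [\frac{l-1}{2}]$ can produce a $K$ with $|K| = [\frac{l-1}{2}]+1 \leq [\frac{l+1}{2}]$, so no admissible subset of $s(l+1)$ is missed.

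The main obstacle is purely combinatorial bookkeeping: one must argue carefully that the map $K \mapsto K\setminus\{(l,l+1)\}$ is a bijection from $\{K\in s(l+1) : (l,l+1)\in K\}$ onto $s(l-1)\cup\{\emptyset\}$, that it respects the complement structure so that $\prod_{k\in c(K)}a_k$ transforms into the corresponding $s(l-1)$-product with no spurious or missing $a_k$ factors (the key point being that $l$ and $l+1$ both drop out of the complement exactly when $(l,l+1)\in K$, while $l-1$ may or may not lie in it depending on whether $(l-2,l-1)\in K$), and that the complementary map $K\mapsto K$ identifies $\{K\in s(l+1):(l,l+1)\notin K\}$ with $s(l)\cup\{\emptyset\}$ with $l+1$ always in the complement. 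Once this correspondence is pinned down the identity falls out immediately; everything else is routine.
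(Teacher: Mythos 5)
Your proposal is correct and follows essentially the same route as the paper: the paper also splits the sum defining the $R$'s according to whether $(l,l+1)$ belongs to $K$, matches the three resulting classes of terms (the singleton $\{(l,l+1)\}$, the sets containing $(l,l+1)$ with $|K|\geq 2$, and the sets avoiding $(l,l+1)$), verifies the parity-dependent upper bound $[\frac{l+1}{2}]$ versus $[\frac{l}{2}]$, and checks the cases $l=1,2$ directly. The only cosmetic difference is that you expand $R_{l+1}$ while the paper expands the right-hand side $-a_{l+1}R_l+\frac{p_lR_{l-1}}{d_{l(l+1)}^2}$.
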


\begin{proof}
	For $l = 1$ and $l = 2$, this follows immediately from the simple expressions for $R_0$, $R_1$, $R_2$ and $R_3$. For $l \geq 3$, the right-hand side is by definition given by 
\begin{align*}
	- \Bigg(&\prod_{k = 1}^{l+1}a_k + \underbrace{\sum_{i=1}^{[\frac{l}{2}]} (-1)^i\sum_{K \in s(l), \vert K \vert = i} \bigg(\prod_{(k,k+1) \in K}\frac{p_k}{d_{k(k+1)}^2}\bigg)\bigg(a_{l+1}\prod_{k \in c(K)} a_k \bigg)}_{(a)} \\ 
 & \underbrace{- \frac{p_l}{d_{l(l+1)}^2}\prod_{k=1}^{l-1}a_k}_{(b)} + \underbrace{\sum_{i=1}^{[\frac{l-1}{2}]} (-1)^{i+1}\sum_{K \in s(l-1), \vert K \vert = i} \bigg(\frac{p_l}{d_{l(l+1)}^2} \prod_{(k,k+1) \in K}\frac{p_k}{d_{k(k+1)}^2}\bigg)\bigg(\prod_{k \in c(K)} a_k \bigg)}_{(c)}\Bigg).
\end{align*}
We need to show that (a) + (b) + (c) = (d) with \[(d) = \sum_{i=1}^{[\frac{l+1}{2}]} (-1)^i\sum_{K \in s(l+1), \vert K \vert = i} \bigg(\prod_{(k,k+1) \in K}\frac{p_k}{d_{k(k+1)}^2}\bigg)\bigg(\prod_{k \in c(K)} a_k \bigg).\] It is trivial that (b) corresponds to $K = \{(l,l+1)\}$ in (d). Using that $[\frac{l+1}{2}] = [\frac{l-1}{2}] + 1$, one can also see that (c) yields the part in (d) where $(l,l+1) \in K$ and $\vert K \vert \geq 2$. Hence, it remains to show that (a) corresponds to the part in (d) where $(l,l+1) \notin K$. Clearly, we only need to check that the boundaries for $\vert K \vert$ agree; in (a), the upper bound is $[\frac{l}{2}]$, while in (d), the upper bound is $[\frac{l+1}{2}]$. However, in (d), we need to take into account that $(l,l+1) \notin K$. We remark the following two facts:
\begin{enumerate}
	\item[(i)] if $l+1$ is even, then a set $K \in s(l+1)$ attains the upper bound $\vert K \vert = [\frac{l+1}{2}] = \frac{l+1}{2}$ if and only if $K = \{(1,2),(3,4),\ldots, (l,l+1)\}$; and 
	\item[(ii)] if $l+1$ is odd, then there are multiple sets in $s(l+1)$ attaining the upper bound $[\frac{l+1}{2}] = \frac{l}{2}$, for example $\{(1,2),(3,4),\ldots, (l-1,l)\}$ and $\{(2,3),(4,5), \ldots, (l,l+1)\}$.
\end{enumerate}
Hence, if $l+1$ is even, then $\vert K \vert$ for $K$ in (d) with $(l,l+1) \notin K$ varies between $1$ and $[\frac{l+1}{2}] - 1 = [\frac{l}{2}]$. In other words, the boundaries for $\vert K \vert$ agree. Likewise, if $l+1$ is odd, then $K$ in (d) with $(l,l+1)\notin K$ can still attain the upper bound $[\frac{l+1}{2}] = [\frac{l}{2}]$. 
\end{proof}

This recurrence relation will be very useful for showing the next formula for $\det(A)$. 

\begin{prop}\label{prop:detA}
Let $A$ be a matrix defined as in~\eqref{eq:A} for some $g \geq 3$, $r_k \geq 1$ for $k = 1,\ldots, g-1$ with $r_{g-1} = 1$, and $d_{k(k+1)} \geq 1$ for $k = 1,\ldots, g-2$. We have
	\[\det(A) = (-1)^{\sum\limits_{k=1}^{g-1}r_k} R_{g-1} \prod_{l=1}^{g-2}R_l^{r_l - r_{l+1}}.\]
\end{prop}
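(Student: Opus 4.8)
The natural approach is induction on $g$, expanding the determinant along the last block row/column of $A$, i.e., along the $r_{g-1} = 1$ row corresponding to $\E_{g-1}$. Since the dual graph is a tree of the shape in Figure~\ref{fig:dual-graph}, removing the last vertex leaves a disjoint union: one tree of the same shape with $g-2$ ``columns'' plus a bunch of isolated leaves, so the cofactor expansion should produce a clean recursion. Concretely, I would first record the base case $g=3$ by a direct computation: $A$ has the block form with $A_1 = -a_1 I_{r_1}$, $A_2 = -a_2 I_{r_2}$, $A_3 = -a_3$ (a $1\times 1$ block), and the off-diagonal blocks built from the vectors $D_{k,k+1}$ with all entries $1/d_{k(k+1)}$; a straightforward elimination (the repeated columns in each $A_{k,k+1}$ differ only by scalars) should give exactly $(-1)^{r_1+r_2+1} R_3 \, R_1^{r_1-r_2} R_2^{r_2 - r_3}$.

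For the inductive step, the key is a \emph{second} induction, this time on $r_{g-1}$ being forced to $1$ but working instead column-by-column inside each block. The cleanest route: prove a lemma computing, for a matrix of this block-tridiagonal shape, the determinant in terms of the ``leading principal block minors.'' Introduce $\Delta_l$ = determinant of the submatrix of $A$ obtained by keeping only the blocks $A_1,\dots,A_l$ together with the connecting off-diagonal blocks, but where in the $l$-th block we keep only \emph{one} representative column/row (i.e., collapse $A_l$ to a single $\E_{lj}$ together with its subtree below). Because all $r_l$ components $\E_{lj}$ are isomorphic and attach identically, the blocks decouple: the determinant of the full thing factors as a product over the ``new'' components appearing at each level. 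I would show $\Delta_l$ satisfies precisely the recurrence $-\Delta_{l+1} = -a_{l+1}\Delta_l + \tfrac{p_l}{d_{l(l+1)}^2}\Delta_{l-1}$ of Lemma~\ref{lem:recurrence-Rl}, by one step of cofactor expansion, so $\Delta_l = R_l$ up to sign; then a bookkeeping argument (tracking how many ``fresh'' copies of each $R_l$ get contributed when one passes from $r_{l+1}$ components at level $l+1$ to $r_l \geq r_{l+1}$ components at level $l$) yields the exponent $r_l - r_{l+1}$ and the final $R_{g-1}$ from the unique top component.

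The main obstacle will be the bookkeeping in the block elimination: making rigorous the claim that the $r_l - r_{l+1}$ ``extra'' components $\E_{lj}$ that do \emph{not} connect upward to level $l+1$ each contribute a factor $R_l$, while the remaining $r_{l+1}$ of them get ``absorbed'' into the recursion feeding level $l+1$. The equally-distributed intersection structure (each $\E_{(l+1)j}$ meets exactly $p_l = r_l/r_{l+1}$ of the $\E_{lj}$, and each $\E_{lj}$ meets only one $\E_{(l+1)j}$) is what makes this work, so I would set up the elimination to exploit exactly that: process the components of $\E_l$ in the $p_l$ groups dictated by which $\E_{(l+1)j}$ they attach to, clear the shared column of $A_{l,l+1}$ using one member of each group (producing the $p_l/d_{l(l+1)}^2$ correction in the recurrence), and peel off the other $p_l - 1$ members of each group as diagonal-type factors $-R_l$. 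Summing the group counts gives $r_{l+1}(p_l - 1) = r_l - r_{l+1}$ factors of $R_l$ per level, which is the exponent in the statement. I would double-check the sign $(-1)^{\sum_k r_k}$ by counting that the product of the diagonal-type factors $(-a_k)$ at each of the $r_k$ components and the signs hidden inside each $R_l$ combine correctly, most easily verified against the $g=3$ base case and one small $g=4$ example.
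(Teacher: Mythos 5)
Your overall strategy --- peel off the components level by level using the equally-distributed tree structure, feed everything into the three-term recurrence of Lemma~\ref{lem:recurrence-Rl}, and count $r_{l+1}(p_l-1)=r_l-r_{l+1}$ leftover factors of $R_l$ per level --- is essentially the paper's proof, which expands $\det(A)$ along the columns of the $\E_1$-block first, then the $\E_2$-block, and so on, producing exactly the factors $R_l^{r_l-r_{l+1}}$ and finishing the bamboo tail with Lemma~\ref{lem:property-t-and-R_l}. Your subtree-recursive formulation via the quantities $\Delta_l$ is arguably a cleaner organization (it would absorb Lemma~\ref{lem:property-t-and-R_l} into the same recursion), but two statements in your plan are incorrect as written and need repair.

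First, deleting the vertex $\E_{g-1}$ from the dual graph does not leave ``one tree of the same shape plus isolated leaves'': every component of $\E_{g-2}$ meets $\E_{g-1}$, so what remains is $r_{g-2}$ disjoint isomorphic subtrees, and the naive induction on $g$ by expanding along the last row does not produce a single smaller instance of the same problem. Second, and more importantly, your $\Delta_l$ does not satisfy the recurrence of Lemma~\ref{lem:recurrence-Rl} verbatim. The root of the subtree at level $l+1$ has $p_l$ children, each carrying a copy of the level-$l$ subtree, and removing the root of a level-$l$ subtree leaves $p_{l-1}$ copies of the level-$(l-1)$ subtree; hence one cofactor expansion at the root gives the multiplicatively twisted relation
\[
\Delta_{l+1}=-a_{l+1}\Delta_l^{\,p_l}-\frac{p_l}{d_{l(l+1)}^2}\,\Delta_{l-1}^{\,p_{l-1}}\Delta_l^{\,p_l-1},
\]
with $\Delta_{l-1}^{\,p_{l-1}}$ rather than $\Delta_{l-1}$ and with the extra power $\Delta_l^{\,p_l}$. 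To conclude you must instead prove inductively the normalized statement $\Delta_l=(-1)^{\sum_{k\le l}r_k/r_l}\,R_l\prod_{j<l}R_j^{(r_j-r_{j+1})/r_l}$, factor $\Delta_l^{\,p_l-1}$ out of the displayed identity, check that $\Delta_{l-1}^{\,p_{l-1}}$ and $\Delta_l$ share the same product of lower $R_j$'s, and only then apply Lemma~\ref{lem:recurrence-Rl} to the bracket $-a_{l+1}R_l+p_lR_{l-1}/d_{l(l+1)}^2$. The exponents and signs do close up, and taking $l=g-1$ (where $r_{g-1}=1$) gives the proposition, but this normalization step is the actual content of the argument and is missing from your plan.
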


Using the recurrence relation from Lemma~\ref{lem:recurrence-Rl} and the expressions in~\eqref{eq:self-intersection-numbers} for $a_k$ for $k = 1,\ldots, g-1$ in which $\frac{r_{k-1}}{r_k} = p_{k-1}$, it is not hard to see that, in our case, the expression for $R_l$ simplifies to \[R_l = \left\{\renewcommand{\arraystretch}{1.5}\begin{array}{ll} \frac{N_{l+1}}{N_1\prod\limits_{k=1}^ld_{k(k+1)}} & \text{ for } l = 1,\ldots, g-2 \\ \frac{n_g}{N_1d\prod\limits_{k=1}^{g-2}d_{k(k+1)}} & \text{ for } l = g-1. \end{array} \right.\] This immediately yields the following expression for the determinant of the intersection matrix of the good $\Q$-resolution of our surface singularities.

\begin{cor}\label{cor:detA}
Let $(S,0) \subset (\C^{g+1},0)$ be a normal surface singularity defined by the equations~\eqref{eq:equations-S} with $g \geq 3$. Consider the good $\Q$-resolution $\hat{\varphi}:\hat{S} \rightarrow S$ of $(S,0)$ introduced in Section~\ref{sec:Q-resolution}. The determinant of the intersection matrix $A$ of $\hat{\varphi}$ is given by \[\det(A) =  (-1)^{\sum\limits_{k=1}^{g-1}r_k} ~\frac{n_g\prod\limits_{k=2}^{g-1}N_k^{r_{k-1}-r_k}}{N_1^{r_1} d\prod\limits_{k=1}^{g-2}d_{k(k+1)}^{r_k}}.\]
\end{cor}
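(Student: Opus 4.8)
The plan is to derive Corollary~\ref{cor:detA} directly from Proposition~\ref{prop:detA} by substituting the simplified expressions for the $R_l$ into the product formula. So the first task is to establish those simplified expressions, namely that $R_l = N_{l+1}/(N_1\prod_{k=1}^l d_{k(k+1)})$ for $l = 1,\ldots, g-2$ and $R_{g-1} = n_g/(N_1 d\prod_{k=1}^{g-2}d_{k(k+1)})$, which the text asserts ``it is not hard to see.'' I would prove this by induction on $l$ using the recurrence relation from Lemma~\ref{lem:recurrence-Rl}, namely $-R_{l+1} = -a_{l+1}R_l + \frac{p_l R_{l-1}}{d_{l(l+1)}^2}$, together with the explicit self-intersection numbers from~\eqref{eq:self-intersection-numbers}. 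For the base cases $R_1 = a_1 = \frac{N_2}{d_{12}N_1}$ this is just the first line of~\eqref{eq:self-intersection-numbers}; one also needs $R_0 = 1$ to kick off the recursion, and it is convenient to record a formal value $R_0 = N_1/(N_1 \cdot \text{empty product})$ so the formula reads uniformly. For the inductive step with $2 \leq l+1 \leq g-2$, I substitute $a_{l+1} = \frac{1}{N_{l+1}}(\frac{r_l N_l}{r_{l+1}d_{l(l+1)}} + \frac{N_{l+2}}{d_{(l+1)(l+2)}})$ and the induction hypotheses for $R_l$ and $R_{l-1}$, and check that the two terms on the right-hand side combine: the $\frac{r_l N_l}{r_{l+1}d_{l(l+1)}}$-part cancels against $\frac{p_l R_{l-1}}{d_{l(l+1)}^2}$ after using $p_l = r_l/r_{l+1}$, leaving exactly $\frac{N_{l+2}}{N_1\prod_{k=1}^{l+1}d_{k(k+1)}}$. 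The final case $l+1 = g-1$ is identical but uses the last line of~\eqref{eq:self-intersection-numbers}, with $n_g/d$ playing the role that $N_{g}/d_{(g-1)g}$ would play, giving $R_{g-1} = n_g/(N_1 d\prod_{k=1}^{g-2}d_{k(k+1)})$.

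Once the formulas for the $R_l$ are in hand, the corollary follows by plugging them into $\det(A) = (-1)^{\sum_{k=1}^{g-1}r_k}R_{g-1}\prod_{l=1}^{g-2}R_l^{r_l - r_{l+1}}$. Writing everything over $N_1$ and the $d_{k(k+1)}$'s, the exponent of $N_1$ in the denominator becomes $1 + \sum_{l=1}^{g-2}(r_l - r_{l+1})$, which telescopes to $1 + r_1 - r_{g-1} = r_1$ using $r_{g-1} = 1$; this matches the claimed $N_1^{r_1}$. The numerator picks up $n_g$ from $R_{g-1}$ and a factor $N_{l+1}^{r_l - r_{l+1}}$ from each $R_l^{r_l - r_{l+1}}$; reindexing $k = l+1$ turns $\prod_{l=1}^{g-2}N_{l+1}^{r_l-r_{l+1}}$ into $\prod_{k=2}^{g-1}N_k^{r_{k-1}-r_k}$, as claimed. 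The $d$ in the denominator comes solely from $R_{g-1}$. Finally, the power of each $d_{k(k+1)}$ in the denominator is $1$ (from $R_{g-1}$) plus $\sum_{l \geq k}(r_l - r_{l+1})$ (from those $R_l$ with $l \geq k$), which telescopes to $1 + r_k - r_{g-1} = r_k$, giving $\prod_{k=1}^{g-2}d_{k(k+1)}^{r_k}$. The sign $(-1)^{\sum r_k}$ carries over unchanged.

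The main obstacle I anticipate is purely bookkeeping rather than conceptual: keeping the telescoping sums of exponents straight, and being careful about the boundary term $R_{g-1}$ which behaves slightly differently from the generic $R_l$ (it contributes an extra $1/d$ and the factor $n_g$ instead of $N_g$, because $\E_{g-1}$ meets $\hat Y$ at $P_{g-1}$ rather than meeting a further exceptional divisor). I would double-check the induction step by verifying it independently in the smallest non-trivial case $g = 3$, where $\det(A) = -R_2 R_1^{r_1-1}$ with $R_1 = a_1 = \frac{N_2}{d_{12}N_1}$ and $R_2 = a_1 a_2 - \frac{p_1}{d_{12}^2} = \frac{n_3}{N_1 d\, d_{12}}$, and confirm it reproduces the stated formula. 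One should also note the harmless consistency check that when the link is a $\z$ all $r_k = 1$, in which case the formula collapses to $\det(A) = (-1)^{g-1}n_g\prod_{k=2}^{g-1}\!1 \big/ (N_1 d\prod_{k=1}^{g-2}d_{k(k+1)})$, matching Remark~\ref{rem:g=2}(ii). Beyond these sanity checks, no genuinely hard step remains, since Proposition~\ref{prop:detA} and Lemma~\ref{lem:recurrence-Rl} do all the structural work.
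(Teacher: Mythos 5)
Your argument is correct and is essentially the paper's own proof: the paper likewise derives the closed forms $R_l = N_{l+1}/(N_1\prod_{k=1}^{l} d_{k(k+1)})$ for $l\leq g-2$ and $R_{g-1} = n_g/(N_1 d\prod_{k=1}^{g-2}d_{k(k+1)})$ from Lemma~\ref{lem:recurrence-Rl} together with the self-intersection numbers~\eqref{eq:self-intersection-numbers}, and then substitutes them into Proposition~\ref{prop:detA}, with your induction and exponent telescoping supplying exactly the details the paper labels ``not hard to see'' and ``immediate''. (One trivial correction to your $g=3$ sanity check: there $\det(A) = (-1)^{r_1+1}R_2R_1^{r_1-1}$, so the overall sign depends on the parity of $r_1$ rather than being $-1$.)
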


In order to better understand the idea of the proof of Proposition~\ref{prop:detA}, we first consider the simple case where $r_k = 1$ for all $k = 1,\ldots, g-1$, and $A$ is the tridiagonal matrix 
\[\left(\begin{matrix}
 -a_1 &\frac{1}{d_{12}} & 0 &\cdots & 0  \\
 \frac{1}{d_{12}} & -a_2 & \frac{1}{d_{23}}  & \cdots & 0\\
 \vdots & \ddots & \ddots & \ddots & \vdots  \\
 0 & \cdots & \frac{1}{d_{(g-3)(g-2)}} & -a_{g-2} & \frac{1}{d_{(g-2)(g-1)}} \\
 0 & \cdots & 0 &  \frac{1}{d_{(g-2)(g-1)}} & -a_{g-1} \end{matrix}
\right).\]
If we denote this matrix for a moment by $A(g)$ for $g \geq 3$, then the general three-term recurrence relation for the determinant of tridiagonal matrices tells us that 
\begin{equation}\label{eq:recurrence-tridiagonal}
 	\det(A(g)) = -a_{g-1}\det(A(g-1)) - \frac{1}{d_{(g-2)(g-1)}^2}\det(A(g-2)),
 \end{equation}
where, by convention, we put $A(1) = 1$ and $A(2) = (-a_1)$. This recurrence relation can be shown by first expanding the determinant of $A(g)$ along the last column (resp. row) and then expanding the minor corresponding to $\frac{1}{d_{(g-2)(g-1)}}$ along the last row (resp. column). Note the similarity between this relation and the relation from Lemma~\ref{lem:recurrence-Rl}. Even more, by induction on $g$ and with exactly the same argument as in the proof of Lemma~\ref{lem:recurrence-Rl}, one can show that $\det(A(g)) = (-1)^{g-1}R_{g-1}$ for $g \geq 3$, in which $p_k = 1$ for all $k = 1,\ldots, g-2$. In other words, the recurrence relation satisfied by the $R_l$ for $l = 1, \ldots, g-2$ in Lemma~\ref{lem:recurrence-Rl} is a generalization of~\eqref{eq:recurrence-tridiagonal} by allowing general $p_k \geq 1$ for $k = 1,\ldots, g-2$. \\

To show Proposition~\ref{prop:detA} for general $r_k \geq 1$ for $k = 1,\ldots, g-2$, we will work towards tridiagonal matrices of the following type: 
\begin{equation}\label{eq:B}
 	B_s := \left(\begin{matrix} 
	-a_s & \frac{1}{d_{s(s+1)}} & \cdots & 0 \\
	\frac{1}{d_{s(s+1)}} & \ddots & \ddots & \vdots \\
	\vdots & \ddots &  \ddots &  \frac{1}{d_{(g-2)(g-1)}} \\
	0 & \cdots & \frac{1}{d_{(g-2)(g-1)}} & -a_{g-1} 
 	\end{matrix}\right),
\end{equation}
where $s \in \{1,\ldots, g-1\}$. Note that $A(g) = B_1$ and, thus, that $\det(B_1) = (-1)^{g-1}R_{g-1}$. For general $s$, we can write the determinant of $B_s$ as $(-1)^{g-s}R_{g-s}$ in which we start with $a_s$ instead of $a_1$. We will write $\det(A)$ (for $g \geq 4$) in terms of these tridiagonal matrices using the formula in the next result.
  
\begin{lemma}\label{lem:property-t-and-R_l}
Consider $g \geq 4$. Let $t$ be the smallest $k \in \{1,\ldots, g-1\}$ such that $r_k = 1$. Assume that $2 \leq t \leq g-2$. Then, \[R_{t-1}\det(B_t) + \frac{p_{t-1}R_{t-2}}{d_{(t-1)t}^2}\det(B_{t+1}) = (-1)^{g-t}R_{g-1}.\]  	
\end{lemma}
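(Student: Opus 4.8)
The plan is to prove Lemma~\ref{lem:property-t-and-R_l} by a direct computation that exploits the recurrence relation of Lemma~\ref{lem:recurrence-Rl} together with the three-term recurrence for the tridiagonal matrices $B_s$. First I would record that, by the remark following the definition of $B_s$ (just before this lemma), $\det(B_s) = (-1)^{g-s}\tilde R_{g-s}$, where $\tilde R_j$ is the quantity $R_j$ but built starting from $a_s$ rather than $a_1$; more importantly, the very same argument that proved~\eqref{eq:recurrence-tridiagonal} and Lemma~\ref{lem:recurrence-Rl} shows that the $\det(B_s)$ satisfy the downward three-term recurrence
\begin{equation*}
\det(B_s) = -a_s\det(B_{s+1}) - \frac{1}{d_{s(s+1)}^2}\det(B_{s+2}),
\end{equation*}
with the conventions $\det(B_g) = 1$ and $\det(B_{g-1}) = -a_{g-1}$. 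So both sides of the claimed identity are governed by essentially the same two-step linear recurrences, one running upward in the index ($R_l$) and one running downward ($\det(B_s)$).

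Next I would prove the identity by induction on $t$, going downward from $t = g-2$ to $t = 2$ (or, equivalently, upward; whichever direction makes the base case cleanest — I expect $t=g-2$ to be the convenient base case since then $B_{t+1} = B_{g-1}$ and $B_{t+2}=B_g$ are explicit). For the inductive step, I would start from the left-hand side $F(t) := R_{t-1}\det(B_t) + \frac{p_{t-1}R_{t-2}}{d_{(t-1)t}^2}\det(B_{t+1})$ and show it equals $F(t+1)$, i.e. that $F(t)$ is independent of $t$ in the admissible range. To do this I substitute the recurrence for $\det(B_t)$ (expressing $\det(B_t)$ via $\det(B_{t+1})$ and $\det(B_{t+2})$) into $F(t)$, collect the coefficient of $\det(B_{t+1})$ and of $\det(B_{t+2})$, and use Lemma~\ref{lem:recurrence-Rl} in the form $-R_{t} = -a_{t}R_{t-1} + \frac{p_{t-1}R_{t-2}}{d_{(t-1)t}^2}$ (i.e. $R_t = a_t R_{t-1} - \frac{p_{t-1}}{d_{(t-1)t}^2}R_{t-2}$) to recognize the coefficient of $\det(B_{t+1})$ as $R_t$ and the coefficient of $\det(B_{t+2})$ as $\frac{p_t R_{t-1}}{d_{t(t+1)}^2}$. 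That rearrangement turns $F(t)$ into exactly $F(t+1)$. The base case $t=g-2$ is then a one-line check: plug in $\det(B_{g-1}) = -a_{g-1}$, $\det(B_g)=1$, $\det(B_{g-2}) = -a_{g-2}\det(B_{g-1}) - \frac{1}{d_{(g-2)(g-1)}^2}\det(B_g) = a_{g-2}a_{g-1} - \frac{1}{d_{(g-2)(g-1)}^2}$, and verify directly using $R_{g-1} = a_{g-1}R_{g-2} - \frac{p_{g-2}}{d_{(g-2)(g-1)}^2}R_{g-3}$ that $F(g-2) = (-1)^{g-(g-2)}R_{g-1} = R_{g-1}$.

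The one point that needs genuine care, rather than routine algebra, is the bookkeeping of signs and of the conventions for the degenerate indices ($R_0 = 1$, $R_1 = a_1$ from the paper, and $\det(B_{g-1}) = -a_{g-1}$, $\det(B_g) = 1$), since the sign $(-1)^{g-t}$ on the right-hand side has to emerge correctly from the recurrences at every step. I would double-check the case $t = 2$ separately because then $R_{t-2} = R_0 = 1$ and $R_{t-1} = R_1 = a_1$ are the initial values rather than instances of the generic formula, so I want to make sure Lemma~\ref{lem:recurrence-Rl} is still being applied in its valid range ($l \geq 1$). Beyond that, the argument is a mechanical induction: once the two recurrences are lined up — the $R_l$ recurrence of Lemma~\ref{lem:recurrence-Rl} and the $\det(B_s)$ recurrence obtained exactly as~\eqref{eq:recurrence-tridiagonal} — the identity $F(t) = F(t+1)$ drops out after collecting terms, and the only real work is the base case. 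I do not expect to need any property of the specific numbers $a_k$, $d_{k(k+1)}$, $p_k$ coming from the geometry; the lemma is purely a statement about sequences satisfying these two linear recurrences.
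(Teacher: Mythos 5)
Your overall strategy is the same as the paper's: prove the stronger identity $R_{s-1}\det(B_s) + \frac{p_{s-1}R_{s-2}}{d_{(s-1)s}^2}\det(B_{s+1}) = (-1)^{g-s}R_{g-1}$ for all $s = t,\ldots, g-2$ by backward induction with base case $s = g-2$, feeding the three-term tridiagonal recurrence for $\det(B_s)$ into the left-hand side and invoking Lemma~\ref{lem:recurrence-Rl}. But there is one genuine gap in your execution. After substituting $\det(B_t) = -a_t\det(B_{t+1}) - \frac{1}{d_{t(t+1)}^2}\det(B_{t+2})$, the coefficient of $\det(B_{t+2})$ in $F(t)$ is $-\frac{R_{t-1}}{d_{t(t+1)}^2}$, with \emph{no} factor $p_t$: the matrices $B_s$ are honest tridiagonal matrices, so their recurrence carries no $p$'s, whereas the recurrence of Lemma~\ref{lem:recurrence-Rl} does. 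Your identification of that coefficient with $\frac{p_tR_{t-1}}{d_{t(t+1)}^2}$, needed to recognize $F(t+1)$, is therefore valid only when $p_t = 1$. This is precisely where the hypothesis on $t$ enters: since $t$ is the smallest index with $r_t = 1$ and $r_{k+1}$ divides $r_k$, one has $r_k = 1$, hence $p_k = 1$, for all $k \geq t$, which legitimizes the step for every $s$ in the range $t,\ldots,g-2$. The same issue already occurs in your base case, where $a_{g-1}R_{g-2} - \frac{R_{g-3}}{d_{(g-2)(g-1)}^2}$ matches $R_{g-1} = a_{g-1}R_{g-2} - \frac{p_{g-2}R_{g-3}}{d_{(g-2)(g-1)}^2}$ only because $p_{g-2} = 1$. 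Consequently your closing claim that the lemma ``is purely a statement about sequences satisfying these two linear recurrences'' and needs no property of the $p_k$ is false: for $s < t$, where $p_s > 1$, the identity fails, so an argument that never uses $p_s = 1$ cannot be correct.

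A smaller point: the telescoping gives $F(s) = -F(s+1)$, not $F(s) = F(s+1)$, and it is this alternation that produces the factor $(-1)^{g-s}$ on the right-hand side. You flag the sign bookkeeping yourself, so I read this as a slip of wording rather than a conceptual error, but the statement that ``$F(t)$ is independent of $t$'' should be corrected.
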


\begin{proof}
First, note that such $t \in \{1,\ldots, g-1\}$ always exists as $r_{g-1} = 1$. Furthermore, note that $r_k = 1$ for all $k \geq t$ so that $p_k = 1$ for all $k \geq t$. With the expression for $\det(B_t)$ (resp. $\det(B_{t+1})$) in terms of $R_{g-t}$ (resp. $R_{g - t- 1}$) in which we start with $a_t$ (resp. $a_{t+1}$) instead of $a_1$ and all $p_k = 1$, we can show this formula with similar arguments as in the proof of Lemma~\ref{lem:recurrence-Rl}. However, we will prove the stronger result that  \[R_{s-1}\det(B_s) + \frac{p_{s-1}R_{s-2}}{d_{(s-1)s}^2}\det(B_{s+1}) = (-1)^{g-s}R_{g-1}\] for all $s = t, \ldots, g-2$ by using backward induction and the statement of Lemma~\ref{lem:recurrence-Rl}. 
For $s = g-2$, we need to consider 
\begin{align*}
 & R_{g-3}\det \left(\begin{matrix} 
-a_{g-2} &  \frac{1}{d_{(g-2)(g-1)}} \\
\frac{1}{d_{(g-2)(g-1)}} & -a_{g-1} 
 \end{matrix}\right) + \frac{p_{g-3}R_{g-4}}{d_{(g-3)(g-2)}^2}\det(-a_{g-1}) \\
 & = -a_{g-1}\bigg(-a_{g-2}R_{g-3} + \frac{p_{g-3}R_{g-4}}{d_{(g-3)(g-2)}^2}\bigg) - \frac{R_{g-3}}{d_{(g-2)(g-1)}^2},
 \end{align*}
 and show that this is equal to $(-1)^{g-s}R_{g-1} = R_{g-1}$. This follows from first applying Lemma~\ref{lem:recurrence-Rl} for $l = g-3$ and then for $l = g-2$ with $p_{g-2} = 1$. If $t = g-2$, we are done. Otherwise, suppose it is true for $s + 1 \leq g-2$. For $s$, we first expand $\det(B_s)$ along the first column and then expand the second minor along the first row to get 
 \[ R_{s-1}\det(B_s) + \frac{p_{s-1}R_{s-2}}{d_{(s-1)s}^2}\det(B_{s+1}) = \Big(-a_s R_{s-1} + \frac{p_{s-1}R_{s-2}}{d_{(s-1)s}^2}\Big) \det(B_{s+1}) - \frac{R_{s-1}}{d_{s(s+1)}^2}\det(B_{s+2}).\]
This way of rewriting $\det(B_s)$ is the same as the one we can use to show the three-term recurrence relation~\eqref{eq:recurrence-tridiagonal} for the tridiagonal matrices $A(g)$, but with expansion along the first column instead of along the last column. Because of the similarity between the relations in~\eqref{eq:recurrence-tridiagonal} and~Lemma~\ref{lem:recurrence-Rl}, it is no surprise that we can apply Lemma~\ref{lem:recurrence-Rl} for $l = s-1$ so that \[R_{s-1}\det(B_s) + \frac{p_{s-1}R_{s-2}}{d_{(s-1)s}^2}\det(B_{s+1}) = -R_s\det(B_{s+1}) - \frac{R_{s-1}}{d_{s(s+1)}^2}\det(B_{s+2}).\] Since $p_s = 1$ as $s \geq t$, we can conclude with the induction hypothesis.
\end{proof}

We are now ready to prove Proposition~\ref{prop:detA} by using these matrices $B_s$. 

\begin{proof}[Proof of Proposition~\ref{prop:detA}]
As in the previous lemma, let $t$ be the smallest $k \in \{1,\ldots, g-1\}$ such that $r_k = 1$. If $t = 1$, we already know that $\det(A) = (-1)^{g-1}R_{g-1}$. For $t \geq 2$, we will show that \[\det(A) = (-1)^{g-t +\sum\limits_{k=1}^{t-1}r_k} R_{g-1} \prod_{l=1}^{t-1}R_l^{r_l - r_{l+1}}.\] Because $r_k = 1$ for $k \geq t$, this yields the formula given in the proposition. Throughout the proof, we will denote by $A(r_1,\ldots, r_{g-1})$ a matrix defined as in~\eqref{eq:A} corresponding to some $r_1,\ldots, r_{g-1} \geq 1$ with $g \geq 3$ in which we also allow $r_{g-1} > 1$. To get an idea on how to show the above formula for general $t$, we first consider $t=2, t = 3$ and $t = 4$.\\

If $t = 2$, then $A = A(r_1,1,\ldots, 1)$ with $r_1 \geq 2$. If $g\geq 4$, we can, similarly as in the proof of Lemma~\ref{lem:property-t-and-R_l}, first expand $\det(A)$ along the first column and then expand the minor corresponding to $\frac{1}{d_{12}}$ along the first row to find that
\begin{align*}
	\det(A) & = -a_1\det(A(r_1-1,1,\ldots, 1)) -\frac{1}{d_{12}^2}\det \left(\begin{matrix}
	A_1^{r_1-1} & 0 \\
	0 & B_3
	\end{matrix}\right) \\
	& = -a_1\det(A(r_1-1,1,\ldots, 1)) + \frac{(-1)^{r_1}a_1^{r_1-1}}{d_{12}^2} \det(B_3),
\end{align*}
where $A_1^{r_1-1}$ denotes the diagonal matrix of dimension $r_1-1$ with $-a_1$ on its diagonal. We can now repeat this on $\det(A(r_1-1,1,\ldots, 1))$: we expand the determinant along the first column and simplify the minor corresponding to $\frac{1}{d_{12}}$. This yields 
\[\det(A) = a_1^2\det(A(r_1-2,1,\ldots, 1)) + \frac{2(-1)^{r_1}a_1^{r_1-1}}{d_{12}^2} \det (B_3).\] Note that the first determinant for $r_1 = 2$ is just $\det(B_2)$. If we do this procedure $r_1 = p_1$ times in total, we get 
\begin{align*}
	\det(A) & = (-1)^{r_1}a_1^{r_1}\det(B_2) + \frac{r_1(-1)^{r_1}a_1^{r_1-1}}{d_{12}^2}\det(B_3) \\
	& = (-1)^{r_1}R_1^{r_1-1}\bigg(R_1\det(B_2) + \frac{p_1R_0}{d_{12}^2}\det(B_3)\bigg)\\
	& = (-1)^{r_1 + g-2} R_1^{r_1-1}R_{g-1}, 
\end{align*}
where we applied Lemma~\ref{lem:property-t-and-R_l} in the last equality. If $g = 3$, then along the same lines, we obtain that \[\det(A) = (-1)^{r_1}R_1^{r_1-1}\bigg(-a_2R_1 + \frac{p_1R_0}{d_{12}^2}\bigg),\] from which the required formula follows by Lemma~\ref{lem:recurrence-Rl}. \\

If $t = 3$ and $g\geq 5$, we start by executing two steps. In the first step, we work as in the $t = 2$ case: $p_1$ times in total, we first expand along the first column and then expand the second minor once more along the first row. This way, we can rewrite $\det(A) = \det(A(r_1,r_2,1,\ldots, 1))$ as \[	(-1)^{p_1}R_1^{p_1-1}\bigg(R_1 \det(\tilde{A}(r_1-p_1,r_2,1,\ldots,1) + \frac{p_1R_0}{d_{12}^2}\det(A(r_1-p_1,r_2-1,1,\ldots,1)\bigg),\] where $\tilde{A}(r_1-p_1,r_2,1,\ldots,1)$ is the matrix
\[\left(\begin{matrix} 
  A_1^{r_1-p_1} & [0 \mid A_{1,2}^{r_1-p_1, r_2-1}] & 0 & 0 & \cdots & 0\\
  [0 \mid A_{1,2}^{r_1-p_1, r_2-1}]^t & A_2 & A_{2,3} & 0 & \cdots & 0 \\
  0 & A_{3,2} & -a_3 & \frac{1}{d_{34}} & \cdots & 0 \\
  0 & 0 & \frac{1}{d_{34}} &  \ddots & \ddots & \vdots\\
  \vdots & \vdots  & \vdots & \ddots &  \ddots & \frac{1}{d_{(g-2)(g-1)}} \\
  0 &  0 & 0 &  \cdots & \frac{1}{d_{(g-2)(g-1)}} & -a_{g-1} 
 \end{matrix}\right),\]
in which $A_1^{r_1-p_1}$ denotes the diagonal matrix of dimension $r_1-p_1$ with $-a_1$ on its diagonal, $A_{1,2}^{r_1-p_1,r_2-1}$ denotes the $(r_1-p_1)\times (r_2-1)$-matrix defined in terms of the column vector $D_{1,2} = (\frac{1}{d_{12}},\ldots, \frac{1}{d_{12}})^t$ of length $p_1$ as in \eqref{eq:def-A_k,k+1}, and $[0\mid A_{1,2}^{r_1-p_1,r_2-1}]$ is the $(r_1-p_1) \times r_2$-matrix coming from $A_{1,2}^{r_1-p_1,r_2-1}$ by adding a zero column. In the second step, we expand $\det(\tilde{A}(r_1-p_1,r_2,1,\ldots,1)$) along the $(r_1-p_1 + 1)$th column (i.e. the column corresponding to the first entry of $A_2$, which also contains the zero column of $[0\mid A_{1,2}^{r_1-p_1,r_2-1}]$) and simplify the minor corresponding to $\frac{1}{d_{23}}$. We find that $\det(A)$ is given by
\begin{align*}
  (-1)^{p_1}R_1^{p_1-1}\Bigg[&\bigg(-a_2R_1 + \frac{p_1R_0}{d_{12}^2}\bigg)\det(A(r_1-p_1,r_2-1,1,\ldots,1)) \\
  & - \frac{R_1}{d_{23}^2} \det(A(r_1-p_1,r_2-1))\det(B_4)\Bigg].
 \end{align*}
By Lemma~\ref{lem:recurrence-Rl}, this is equal to \[
  (-1)^{p_1+1}R_1^{p_1-1}\Bigg[R_2\det(A(r_1-p_1,r_2-1,1,\ldots,1)) + \frac{R_1}{d_{23}^2} \det(A(r_1-p_1,r_2-1))\det(B_4)\Bigg].\]
Repeating both steps on $\det(A(r_1-p_1,r_2-1,1,\ldots,1))$ and $\det((A(r_1-p_1,r_2-1))$ gives 
\begin{align*}
	\det(A) = (-1)^{2(p_1+1)}R_1^{2(p_1-1)}R_2 \Bigg[&R_2\det(A(r_1-2p_1,r_2-2,1,\ldots, 1) \\
	& + \frac{2R_1}{d_{23}^2}\det(A(r_1-2p_1,r_2-2))\det(B_4)\Bigg]. 
\end{align*} 
Note that for $\det(A(r_1-p_1,r_2-1))$, we do not have a minor corresponding to $\frac{1}{d_{23}}$ in the second step. Hence, if we do these two steps $r_2 = p_2$ times in total, we find that 
\[\det(A) = (-1)^{(p_1+1)r_2}R_1^{(p_1-1)r_2}R_2^{r_2-1} \bigg(R_2\det(B_3) + \frac{p_2R_1}{d_{23}^2}\det(B_4)\bigg).\] We can conclude using Lemma~\ref{lem:property-t-and-R_l} and the fact that $r_1 = p_1r_2$. The result for $t = 3$ and $g = 4$ again follows along the same lines with Lemma~\ref{lem:recurrence-Rl}. \\

For $t = 4$ and $g \geq 6$, we can compute $\det(A) = \det(A(r_1,r_2, r_3,1,\ldots, 1))$ as follows. We first follow the procedure that we used for $t = 3$. More precisely, we execute $p_2$ times two steps: first, we expand $p_1$ times along the first column, and then, we expand along the column corresponding to the first entry of $A_2$, and in both steps, we simplify the second minor corresponding to $\frac{1}{d_{12}}$ and $\frac{1}{d_{23}}$, respectively. In other words, we rewrite $\det(A)$ as
\begin{align*}
 (-1)^{(p_1+1)p_2}R_1^{(p_1-1)p_2}R_2^{p_2-1} \bigg(& R_2\det(\tilde{A}(r_1-p_1p_2,r_2-p_2,r_3,1,\ldots, 1)) \\
 	&+ \frac{p_2R_1}{d_{23}^2}\det(A(r_1-p_1p_2,r_2-p_2,r_3-1,1,\ldots, 1))\bigg),
 \end{align*}
where $\tilde{A}(r_1-p_1p_2,r_2-p_2,r_3,1,\ldots, 1)$ is the matrix 
\[\left(\begin{matrix} 
  A_1^{r_1-p_1p_2} & A_{1,2}^{r_1-p_1p_2, r_2-p_2} & 0 & 0 & \cdots & 0\\
  (A_{1,2}^{r_1-p_1p_2, r_2-p_2})^t & A_2^{r_2-p_2} & [0 \mid A_{2,3}^{r_2-p_2,r_3-1}] & 0 & \cdots & 0 \\
  0 & [0 \mid A_{2,3}^{r_2-p_2,r_3-1}]^t & A_3 & A_{3,4} & \cdots & 0 \\
  0 & 0 & A_{4,3} &  \ddots & \ddots & \vdots\\
  \vdots & \vdots  & \vdots & \ddots &  \ddots & \frac{1}{d_{(g-2)(g-1)}} \\
  0 &  0 & 0 &  \cdots & \frac{1}{d_{(g-2)(g-1)}} & -a_{g-1} 
 \end{matrix}\right),\] in which we use the same notation as before. Now, by expanding along the column containing the first entry of $A_3$, simplifying the minor of $\frac{1}{d_{34}}$ and using Lemma~\ref{lem:recurrence-Rl}, we can further rewrite $\det(A)$ as 
 \begin{align*}
 	(-1)^{(p_1+1)p_2+1}R_1^{(p_1-1)p_2}R_2^{p_2-1}\Bigg[& R_3 \det(A(r_1-p_1p_2,r_2-p_2,r_3-1,1,\ldots, 1)) \\
 	& + \frac{R_2}{d_{34}^2} \det(A(r_1-p_1p_2,r_2-p_2,r_3-1))\det(B_5)\Bigg].
 \end{align*}
 We can repeat these two steps (i.e., the procedure for $t = 3$ followed by an expansion along the column corresponding to the first entry of $A_3$) on $\det(A(r_1-p_1p_2,r_2-p_2,r_3-1,1,\ldots, 1))$ and $\det(A(r_1-p_1p_2,r_2-p_2,r_3-1))$. In total, we can do this $r_3 = p_3$ times to find that \[\det(A) = (-1)^{((p_1+1)p_2+1)r_3}R_1^{(p_1-1)p_2r_3}R_2^{(p_2-1)r_3}R_3^{r_3-1}\bigg(R_3 \det(B_4) + \frac{p_3R_2}{d_{34}^2} \det(B_5)\bigg),\]
 which equals the required formula by Lemma~\ref{lem:property-t-and-R_l}. The case $g = 5$ can once more be concluded along the same lines. \\
 
For general $t \geq 3$ and $g \geq t + 2$, we can obtain the above formula for $\det(A)$ in a similar way as for $t = 3$ and $t = 4$. More precisely, we first repeat the procedure used for $t - 1$ to obtain an expression involving a matrix similar to $\tilde{A}(r_1-p_1,r_2-1,1,\ldots, 1)$ and $\tilde{A}(r_1-p_1p_2,r_2-p_2,r_3,1,\ldots, 1)$. Then, we can further expand along the column containing the first entry of $A_{t-1}$, simplify the minor of $\frac{1}{d_{(t-1)t}}$ and use Lemma~\ref{lem:recurrence-Rl}. Again, executing these two steps $p_t$ times in total, yields \[\det(A) = (-1)^{\sum_{k=1}^tr_k}\prod_{l=1}^{t-1}R_l^{r_l-r_{l+1}}\bigg(R_{t-1}\det(B_t) + \frac{p_{t-1}R_{t-2}}{d_{(t-1)t}^2}\det(B_{t+1})\bigg),\] from which the formula follows with Lemma~\ref{lem:property-t-and-R_l}. If $g = t+1$, then the formula follows along the same lines with Lemma~\ref{lem:recurrence-Rl}.
\end{proof}

\subsection{The determinant of $(S,0)$} With the information on the singularities of $\hat{S}$ that we listed in Section~\ref{sec:Q-resolution} and the expression for $\det(A)$ from Corollary~\ref{cor:detA}, we immediately find the determinant of $(S,0)$; it is given by 
\begin{align*}
\det(S) & = \vert\det(A) \vert ~ d ~ \left(\frac{N_1}{M_0}\right)^{\frac{\lbeta_0}{M_0}} ~ \prod_{k=1}^{g-1}\left(\frac{N_k}{M_k}\right)^{\frac{\lbeta_k}{M_k}} ~ \prod_{k=1}^{g-2}d_{k(k+1)}^{r_k} \\
& = \left(\frac{N_1}{M_0} \right)^{\frac{\lbeta_0}{M_0}-r_1} ~ \prod_{k=1}^{g-1}\left(\frac{N_k}{M_k} \right)^{\frac{\lbeta_k}{M_k}-r_k} ~ \prod_{k=2}^{g-1}N_k^{r_{k-1}-r_k} ~ n_g ~ \left(\frac{1}{M_0} \right)^{r_1} ~ \prod_{k=1}^{g-1}\left(\frac{N_k}{M_k} \right)^{r_k}. 
\end{align*}

From the expression~\eqref{eq:d_k} for $d_k = \frac{N_k}{M_k}$ for $k = 1,\ldots, g-1$, we know that \[\frac{N_k}{M_k} = \frac{\lcm(n_k,\ldots, n_g)}{\lcm(n_{k+1},\ldots, n_g)}.\] Note that for $k = 1$, this gives that $\frac{N_1}{M_1} = \frac{M_0}{\lcm(n_2,\ldots, n_g)}$. Hence, using the notation $r_0 = \frac{\lbeta_0}{M_0}$, we can further rewrite $\det(S)$ into the following expression.

\begin{cor}\label{cor:detS}
The determinant of a normal surface singularity $(S,0) \subset (\C^{g+1},0)$ defined by the equations~\eqref{eq:equations-S} with $g \geq 3$ is given by 
\[\det(S) = \prod_{k=1}^{g-1}\left(\frac{N_k}{M_k} \right)^{\frac{\lbeta_k}{M_k}-r_k} \left(\frac{N_k}{\lcm(n_k,\ldots, n_g)}\right)^{r_{k-1}-r_k}.\]
\end{cor}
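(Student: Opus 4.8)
The plan is to start from the two expressions for $\det(S)$ displayed just before the statement — the first obtained by applying formula~\eqref{eq:det-bir-morphism} to $\hat\varphi$, the second by substituting $|\det(A)|$ from Corollary~\ref{cor:detA} — and then carry out a purely algebraic simplification of the second one. First I would recall from Section~\ref{sec:Q-resolution} the singularities of $\hat S$ together with their orders: the $\frac{\lbeta_0}{M_0}$ points $Q_0$ of order $\frac{N_1}{M_0}$, the $\frac{\lbeta_k}{M_k}$ points $Q_k$ of order $\frac{N_k}{M_k}$ for $k=1,\dots,g-1$, the $r_k$ points $Q_{k(k+1)}$ of order $d_{k(k+1)}$ for $k=1,\dots,g-2$, and the single point $P_{g-1}$ of order $d$. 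Multiplying these with $|\det(A)|$ from Corollary~\ref{cor:detA} cancels the factors $d$ and $\prod_{k=1}^{g-2}d_{k(k+1)}^{r_k}$ against the denominator of $\det(A)$ and, after collecting the powers of the $N_k$, reproduces the second displayed line; this part is bookkeeping only.

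The substance is the reduction of that second line. I would set $L_k:=\lcm(n_k,\dots,n_g)$ for $k=1,\dots,g$, so that $L_g=n_g$, and record three normalizations: $M_0=L_1$ (because $\frac{\lbeta_0}{e_0}=1$, whence $M_0=\lcm(n_1,\dots,n_g)$), $r_0=\frac{\lbeta_0}{M_0}$, and $r_{g-1}=1$ (since $e_{g-1}=n_g$ by property~(i) in Section~\ref{sec:space-monomial}, so $r_{g-1}=\frac{e_{g-1}}{n_g}=1$). By~\eqref{eq:d_k} we have $\frac{N_k}{M_k}=\frac{L_k}{L_{k+1}}$ for $k=1,\dots,g-1$. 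The block of the second line carrying the exponents $\frac{\lbeta_k}{M_k}-r_k$ already has the desired shape, so I would concentrate on the remaining factor
\[
\Big(\tfrac{N_1}{M_0}\Big)^{\!r_0-r_1}\ \prod_{k=2}^{g-1}N_k^{\,r_{k-1}-r_k}\ \cdot\ n_g\ \cdot\ \Big(\tfrac{1}{M_0}\Big)^{\!r_1}\ \prod_{k=1}^{g-1}\Big(\tfrac{N_k}{M_k}\Big)^{\!r_k}.
\]
Substituting $\frac{N_k}{M_k}=\frac{L_k}{L_{k+1}}$ into the last product and re-indexing gives the near-telescoping identity $\prod_{k=1}^{g-1}\big(\tfrac{L_k}{L_{k+1}}\big)^{r_k}=\dfrac{L_1^{r_1}}{L_g^{r_{g-1}}}\prod_{k=2}^{g-1}L_k^{\,r_k-r_{k-1}}$. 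Feeding this back, the factor $M_0^{r_1}=L_1^{r_1}$ cancels and the leftover $n_g$-power is $n_g^{\,1-r_{g-1}}=n_g^0=1$ by $r_{g-1}=1$; what remains is $\prod_{k=2}^{g-1}(N_k/L_k)^{r_{k-1}-r_k}$. Finally, since $M_0=L_1$, the surviving factor $(N_1/M_0)^{r_0-r_1}=(N_1/L_1)^{r_0-r_1}$ is precisely the $k=1$ term of this product, so the whole factor equals $\prod_{k=1}^{g-1}(N_k/\lcm(n_k,\dots,n_g))^{r_{k-1}-r_k}$. Multiplying back by the untouched block produces the claimed formula.

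There is essentially no conceptual obstacle; the only point requiring care is the re-indexing of $\prod_{k=1}^{g-1}(L_k/L_{k+1})^{r_k}$, which runs only up to $g-1$ so the telescoping is not exact and leaves the asymmetric boundary contributions $L_1^{r_1}$ and $L_g^{-r_{g-1}}$ — and it is exactly there that the identifications $M_0=L_1$ and $r_{g-1}=1$ are invoked. One should also check the two edge terms $k=1$ and $k=g-1$ separately to confirm that they merge cleanly into the single product $\prod_{k=1}^{g-1}$ asserted in the statement.
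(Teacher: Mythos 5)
Your proposal is correct and follows essentially the same route as the paper: the two displayed lines preceding the corollary are obtained exactly as you describe (formula~\eqref{eq:det-bir-morphism} with the listed singularity orders, then substitution of Corollary~\ref{cor:detA}), and the paper's final step is precisely the telescoping of $\prod_{k}(N_k/M_k)^{r_k}$ via $\frac{N_k}{M_k}=\frac{\lcm(n_k,\ldots,n_g)}{\lcm(n_{k+1},\ldots,n_g)}$ together with the identifications $M_0=\lcm(n_1,\ldots,n_g)$, $r_0=\frac{\lbeta_0}{M_0}$ and $r_{g-1}=1$. Your write-up merely makes explicit the boundary bookkeeping that the paper leaves to the reader, and your algebra checks out.
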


According to Theorem~\ref{thm:conditions-rat-int}, we need to investigate when this determinant is equal to $1$, under the condition that the link of $(S,0)$ is already a $\q$ or, in other words, that $\gcd(n_k,\lcm(n_{k+1},\ldots, n_g)) = 1$ or $\gcd(\frac{\lbeta_k}{e_k},\lcm(n_{k+1},\ldots, n_g)) =1$ for all ${k = 1,\ldots, g-1}$. Recall that the condition $\gcd(n_k,\lcm(n_{k+1},\ldots, n_g)) = 1$ is equivalent to $r_{k-1} = r_k$. Furthermore, it is equivalent to $\frac{N_k}{M_k} = n_k$. In other words, if $\gcd(n_k,\lcm(n_{k+1},\ldots, n_g)) = 1$, then the part for $k$ in $\det(S)$ is given by \[\left(\frac{N_k}{M_k} \right)^{\frac{\lbeta_k}{M_k}-r_k} \left(\frac{N_k}{\lcm(n_k,\ldots, n_g)}\right)^{r_{k-1}-r_k} = n_k^{\frac{\lbeta_k}{M_k}-r_k}.\] Similarly, the condition $\gcd(\frac{\lbeta_k}{e_k},\lcm(n_{k+1},\ldots, n_g)) =1$ is equivalent to both $\frac{\lbeta_k}{M_k} = r_k$ and $\frac{N_k}{\lcm(n_k,\ldots, n_g)} = \frac{\lbeta_k}{e_k}$ so that in this case, the part for $k$ is given by \[\frac{\lbeta_k}{e_k}^{r_{k-1}-r_k}.\] This implies that, in both cases, the part for $k$ in $\det(S)$ is equal to $1$ if and only if $\gcd(n_k,\lcm(n_{k+1},\ldots, n_g)) = \gcd(\frac{\lbeta_k}{e_k},\lcm(n_{k+1},\ldots, n_g)) =1$. It follows that $\det(S)$ is equal to $1$ if and only if $\gcd(n_k,\lcm(n_{k+1},\ldots, n_g)) = \gcd(\frac{\lbeta_k}{e_k},\lcm(n_{k+1},\ldots, n_g)) =1$ for all $k = 1,\ldots, g-1$. Finally, one can see that the condition that $\gcd(n_k,\lcm(n_{k+1},\ldots, n_g)) = 1$ for all $k = 1,\ldots, g-1$ is equivalent to the condition that $n_i$ for $i = 1,\ldots, g$ are pairwise coprime. Hence, the condition $\gcd(\frac{\lbeta_1}{e_1},\lcm(n_2,\ldots, n_g)) = 1$ becomes $\gcd(n_0,n_2,\ldots, n_g) = 1$, which is equivalent to $\gcd(n_0,n_i) = 1$ for all $i =2,\ldots, g$. Because $n_0$ and $n_1$ are coprime by assumption, we indeed find that $(S,0)$ has a $\z$ link if and only if the exponents $n_i$ for $i = 0,\ldots, g$ are pairwise coprime and $\gcd(\frac{\lbeta_k}{e_k},n_{k+1}\cdots n_g) = \gcd(\frac{\lbeta_k}{e_k},e_k) = 1$ for $k = 2,\ldots, g-1$. This ends our proof of Theorem~\ref{thm:rat-int-hom}. 

\begin{ex}\label{ex:int-hom}
Consider the surface $S_1 \subset \C^4$ ($g = 3$) defined by the equations
	\[\renewcommand{\arraystretch}{1.2}{\left\{\begin{array}{l c l l l}
	x_1^2-x_0^3 & + & x_2^7 - x_0^{20}x_1 &  = 0 \\
	x_2^7 - x_0^{20}x_1 & + & x_3^5 - x_0^{88}x_1x_2^6 &= 0.
    \end{array}\right.}\]	
The semigroup of the corresponding space monomial curve has $(70,105,215,1511)$ as minimal generating set. By Theorem~\ref{thm:rat-int-hom}, the link of $(S,0)$ is a $\z$ as the exponents $3,2,7$ and $5$ are pairwise coprime and $\gcd(\frac{\lbeta_2}{e_2},e_2) = \gcd(\frac{215}{5},5) = 1$. However, if we modify these equations slightly, then the surface $S_2 \subset \C^4$ given by \[\renewcommand{\arraystretch}{1.2}{\left\{\begin{array}{l c l l l}
	x_1^2-x_0^3 & + & x_2^7 - x_0^{21}x_1 &  = 0 \\
	x_2^7 - x_0^{21}x_1 & + & x_3^5 - x_0^{92}x_1x_2^6 &= 0
    \end{array}\right.}\] 
does not have a $\z$ link. Indeed, the corresponding set of generators is $(70,105,225,1579)$ with $\gcd(\frac{\lbeta_2}{e_2},e_2) = \gcd(\frac{225}{5},5) \neq 1$. Note that the link of $(S_2,0)$ is a $\q$ as the exponents $3,2,7$ and $5$ are still pairwise coprime. The surface singularity from Example~\ref{ex:E_{g-1}-superfluous} is an example of a surface singularity in our family with no pairwise coprime exponents, but whose link is a $\q$ as $\gcd(\frac{\lbeta_1}{e_1},\lcm(n_2,n_3)) = \gcd(\frac{12}{4},2) = 1$ and $ \gcd(\frac{\lbeta_2}{e_2},n_3) = \gcd(\frac{26}{2},2) = 1$. Finally, the equations \[\renewcommand{\arraystretch}{1.2}{\left\{\begin{array}{l c l l l}
	x_1^2-x_0^3 & + & x_2^4 - x_0^{11}x_1 &  = 0 \\
	x_2^4 - x_0^{11}x_1 & + & x_3^3 - x_0^{28}x_1x_2^3 &= 0
    \end{array}\right.}\] 
    define a surface $S_3 \subset \C^4$ with neither a $\q$ nor a $\z$ link: the corresponding generating set is $(24,36,75,311)$ with $\gcd(n_1,\lcm(n_2,n_3)) = \gcd(2,12) \neq 1$ and $\gcd(\frac{\lbeta_1}{e_1},\lcm(n_2,n_3)) = \gcd(\frac{36}{12},12) \neq 1$.
    \end{ex}
    
\subsection{Our surface singularities with $\z$ link versus singularities of splice type}\label{sec:our-surfaces-splice-type}

We finish this article by showing that if $(S,0)$ has a $\z$ link, then it is of splice type. In other words, they belong to the family of complete intersection singularities of splice type defined by Neumann and Wahl and support their conjecture on the possible normal complete intersection surface singularities with a $\z$ link. \\

Since $(S,0)$ for $g = 2$ is trivially of splice type, we assume that $g \geq 3$. We first determine the splice diagram of $(S,0)$. We can again use the good $\Q$-resolution $\hat{\varphi}:\hat{S} \rightarrow S$. In Remark~\ref{rem:g=2}, we already mentioned that each exceptional divisor $\E_k$ for $k = 1,\ldots, g-1$ is irreducible with multiplicity $N_k = n_k\lbeta_k$, and that the dual graph is bamboo-shaped with quotient singularities as described in Section~\ref{sec:Q-resolution}. Taking a closer look at these singularities, one can check that the resolution $\hat{\varphi}$ is as in Figure~\ref{fig:res-integral}, where the numbers in brackets represent the orders of the small groups acting on the singular points.

\begin{figure}[ht]
\includegraphics{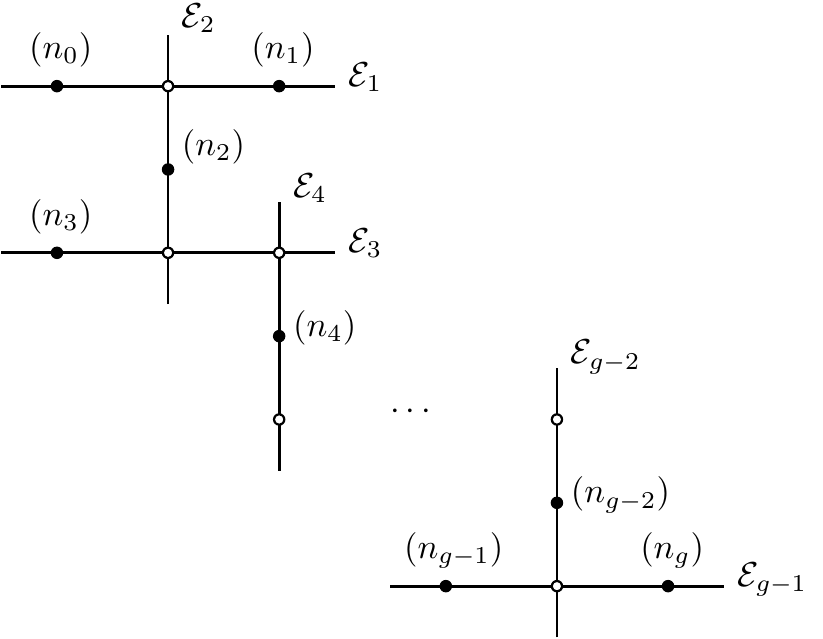}
\caption{The good $\Q$-resolution of $(S,0)$ when the link is a $\z$.}
\label{fig:res-integral}
\end{figure}

It immediately follows that the splice diagram is of the form as in Figure~\ref{fig:splice-diagram-integral}, in which the nodes from left to right correspond to $\E_k$ for $k = 1,\ldots, g-1$, and the edge weights $n_k$ for $k =0,\ldots, g$ come from the singular points $Q_k$ for $k = 0,\ldots, g-1$ and $P_{g-1}$. It remains to show that the other weights are given as in the figure. \\

\begin{figure}[ht]
\includegraphics{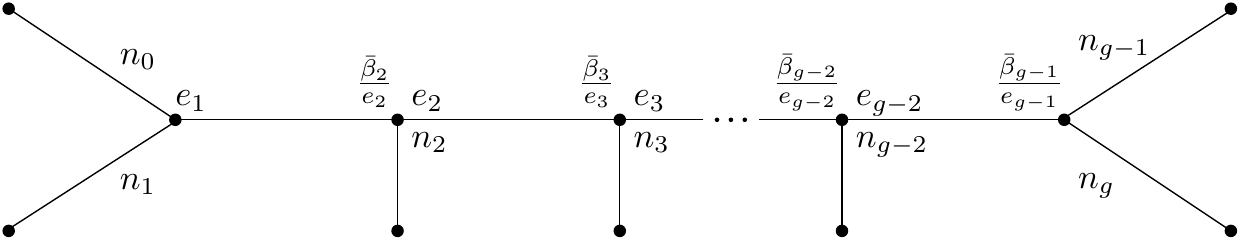}
\caption{The splice diagram of $(S,0)$ with a $\z$ link}
\label{fig:splice-diagram-integral}
\end{figure}

We start by showing that the order $d_{k(k+1)}$ corresponding to $Q_{k(k+1)} = \E_k \cap \E_{k+1}$ for $k = 1,\ldots, g-2$ becomes very easy. Following the approach explained in Section~\ref{sec:Q-resolution}, we need to consider the quotient space 
\[X \left(\begin{array}{c|cc}
			n_{k+1} \bar{\beta}_{k+1} - n_k \bar{\beta}_k & e_k & -e_k \\[0.2cm]
			n_{k+1} \bar{\beta}_{k+1} - n_k \bar{\beta}_k & - \frac{\bar{\beta}_{k+1}}{e_{k+1}} & \frac{n_k\lbeta_k}{e_k}
			\end{array} \right).\] 
Because $\gcd(\frac{\lbeta_{k+1}}{e_{k+1}},e_{k+1}) = 1$ by assumption on the link, and $\gcd(\frac{\lbeta_{k+1}}{e_{k+1}},n_{k+1}) = 1$ by the properties of the semigroup, we see that $\gcd(\frac{\lbeta_{k+1}}{e_{k+1}},e_k) = \gcd(\frac{\lbeta_{k+1}}{e_{k+1}},n_{k+1}e_{k+1}) = 1$. Hence, the isomorphism in \eqref{eq:upper-triangular} says that this quotient space is isomorphic to 
\[X \left(\begin{array}{c|cc}
			n_{k+1} \bar{\beta}_{k+1} - n_k \bar{\beta}_k & 1 & - \alpha e_k + \beta\frac{n_k\lbeta_k}{e_k} \\[0.2cm]
			n_{k+1} \bar{\beta}_{k+1} - n_k \bar{\beta}_k & 0 &  n_k \bar{\beta}_k - n_{k+1} \bar{\beta}_{k+1} 
			\end{array} \right) = X\Big(n_{k+1} \bar{\beta}_{k+1} - n_k \bar{\beta}_k ; 1 , -\alpha e_k + \beta\frac{n_k\lbeta_k}{e_k}\Big),\] 
where $\alpha e_k - \beta \frac{\lbeta_{k+1}}{e_{k+1}} = 1$. It follows that $d_{k(k+1)} = n_{k+1}\lbeta_{k+1} - n_k\lbeta_k = N_{k+1} - N_k$. Note that this is consistent with~\eqref{eq:d_{k(k+1)}}. Using this, one can also easily see that the expressions~\eqref{eq:self-intersection-numbers} for the self-intersection numbers $-a_k := \E_{kj}^2$ become 
\[a_k = \renewcommand{\arraystretch}{1.5} \left\{\begin{array}{ll}
		\frac{N_2}{d_{12}N_1}  &\text{for } k = 1 \\
		\frac{N_{k+1} - N_{k-1}}{d_{(k-1)k}d_{k(k+1)}} & \text{for } k = 2,\ldots, g-2\\
		\frac{1}{d_{(g-2)(g-1)}}  & \text{for } k = g-1.\\
\end{array}\right.\]

Let us now take a look at the edge to the right of $\E_1$. To show that its weight is $e_1$, we need to compute the determinant of the intersection matrix corresponding to the dual graph coming from removing $\E_1$ in Figure~\ref{fig:res-integral} and resolving the singular points on $\E_2,\ldots, \E_{g-1}$. By~\eqref{eq:det-bir-morphism}, this is equal to \[\vert\det(B_2)\vert~\prod_{l=2}^gn_l~\prod_{l=1}^{g-2}d_{l(l+1)} = \vert\det(B_2)\vert~e_1~\prod_{l=1}^{g-2}d_{l(l+1)},\] where $B_2$ is defined as in~\eqref{eq:B}. Hence, we need to check that \[\vert\det(B_2)\vert =  \frac{1}{\prod_{l=1}^{g-2}d_{l(l+1)}}.\] Similarly, for $k = 2,\ldots, g-2$ (if $g\geq 4$), we want that \[e_k = \vert\det(B_{k+1})\vert~ \prod_{l=k+1}^gn_l~\prod_{l=k}^{g-2}d_{l(l+1)} = \vert\det(B_{k+1})\vert~ e_k ~\prod_{l=k}^{g-2}d_{l(l+1)}.\] Using the expression for $\det(B_s)$ for $s = 2,\ldots, g-1$ in terms of the $R_l$ from Section~\ref{sec:determinant}, or using an induction argument, one can see that this is indeed true. \\

Analogously, to show that the weight on the edge to the left of $\E_k$ for ${k = 2,\ldots, g-1}$ is equal to $\frac{\lbeta_k}{e_k}$, we need to check that \[\frac{\lbeta_k}{e_k} = \vert\det(B'_{k-1})\vert~\prod_{l=0}^{k-1}n_l~\prod_{l=1}^{k-1}d_{l(l+1)},\] where \[B'_s := \left(\begin{matrix} 
	-a_1 & \frac{1}{d_{12}} & \cdots & 0 \\
	\frac{1}{d_{12}} & \ddots & \ddots & \vdots \\
	\vdots & \ddots &  \ddots &  \frac{1}{d_{(s-1)s}} \\
	0 & \cdots & \frac{1}{d_{(s-1)s}} & -a_{s} 
 	\end{matrix}\right), \qquad s = 1,\ldots, g-2. \]
By, for example, an easy induction argument, one can compute that \[\det(B'_s) = \frac{(-1)^sN_{s+1}}{N_1\prod_{l=1}^sd_{l(l+1)}}.\] Since $N_{s+1} = n_{s+1}\lbeta_{s+1}$ and $N_1 = \prod_{l=0}^gn_l$, we can conclude. \\

Checking the semigroup condition and finding the splice type equations are now very easy. Denote by $w$ for $k = 0,\ldots, g$ the leaf corresponding to $n_w$, and relate to $w$ the variable $z_w$. For the edge to the right of $\E_k$ for $k = 1,\ldots, g-2$, the numbers $l'_{kw}$ for ${w = k+1,\ldots, g}$ are given by $e_w\prod_{l=k+1}^{w-1}n_l$. Hence, $e_k = n_w l'_{kw}$ for every $w = k+1,\ldots, g$ or, thus, the edge weight $e_k$ is indeed contained in the semigroup $\langle l'_{kw} \mid w = k+1,\ldots, g\rangle $. For the edge to the left of $\E_k$ for $k = 2,\ldots, g-1$, the numbers $l'_{kw}$ for $w = 0,\ldots, k-1$ are given by $\frac{\lbeta_w}{e_w}\prod_{l=w+1}^{k-1}n_l = \frac{\lbeta_w}{e_{k-1}}$ so that $\frac{\lbeta_k}{e_k} = \frac{n_k\lbeta_k}{e_{k-1}} = b_{k0}\frac{\lbeta_0}{e_{k-1}}+\cdots +b_{k(k-1)}\frac{\lbeta_{k-1}}{e_{k-1}}$ is contained in the semigroup $\langle l'_{kw} \mid w = 0,\ldots, k-1\rangle $. It follows that the semigroup condition is fulfilled. Furthermore, along the same lines, we have shown that the following equations are of strict splice type: 
\[\renewcommand{\arraystretch}{1.2}{\left\{\begin{array}{l c l c l l}
	z_0^{n_0} & + &  z_1^{n_1} & + & z_2^{n_2} &  = 0 \\
	z_2^{n_2} & + & z_3^{n_3} & + & z_0^{b_{20}}z_1^{b_{21}} &= 0 \\
	& & & & & \vdots  \\
	z_{g-1}^{n_{g-1}} & + & z_g^{n_g} & + & z_0^{b_{(g-1)0}}z_1^{b_{(g-1)1}}\cdots z_{g-2}^{b_{(g-1)(g-2)}} & = 0.
	\end{array}\right.}\] 
These are the equations of $(S,0)$ up to higher order terms and coefficients. In other words, the singularity $(S,0)$ is indeed of splice type.


\end{document}